\numberwithin{equation}{section}
\theoremstyle{plain}
\newtheorem{Thm}{Theorem}[section]
\newtheorem{Lem}[Thm]{Lemma}
\newtheorem{Coro}[Thm]{Corollary}
\newtheorem{Prop}[Thm]{Proposition}
\theoremstyle{definition}
\newtheorem{Def}[Thm]{Definition}
\newtheorem{Rem}[Thm]{Remark}
\definecolor{darkgreen}{rgb}{0,0.6,0.05}
\newcommand{\connect}{\xleftrightarrow}
\title{Sharp bounds on the half-space two-point function for high-dimensional Bernoulli percolation}
\begin{document}

\author{Romain Panis\footnotemark[1]\footnote{Universit\'e Claude Bernard Lyon 1, Villeurbanne, France, \url{panis@math.univ-lyon1.fr}, \url{schapira@math.univ-lyon1.fr}}\:, Bruno Schapira\footnotemark[1]}

\maketitle

\abstract{We consider Bernoulli percolation on $\mathbb Z^d$ with $d>6$. We prove an up-to-constant estimate for the critical two-point function restricted to a half-space. This completes previous results of Chatterjee and Hanson (Commun. Pure Appl. Math., 2021), and Chatterjee, Hanson, and Sosoe (Commun. Math. Phys., 2023), and solves a question asked by Hutchcroft, Michta, and Slade (Ann. Probab., 2023).
}

\section{Introduction}\label{sec:intro}
Let $\mathcal G=(\mathcal V,\mathcal E)$ be a graph with vertex set $\mathcal V$ and edge set $\mathcal E$. If $x,y\in \mathcal V$, we write $x \sim y$ to say that $\{x,y\}\in \mathcal E$. We consider Bernoulli percolation on $\mathcal G$. Given $p\in [0,1]$, we construct a random subgraph of $\mathcal G$ by independently keeping (resp. deleting) each edge of $\mathcal E$ with probability $p$ (resp. $1-p$). The associated measure is denoted by $\mathbb P_p$. We focus on the following examples of graphs $\mathcal G$: for $d\geq 2$,
\begin{enumerate}
	\item[$(i)$] Nearest-neighbour model: $\mathcal V=\mathbb Z^d$ and $\mathcal E=\{\{x,y\}: \Vert x-y\Vert_1=1\}$ where $\Vert \cdot\Vert_1$ is the $\ell^1$ norm on $\mathbb R^d$;
	\item[$(ii)$] Spread-out model with \emph{spread} parameter $L\geq 1$: $\mathcal V=\mathbb Z^d$ and $\mathcal E=\{\{x,y\}: \Vert x-y\Vert_1\leq L\}$.
\end{enumerate}
It is well-known (see for instance \cite{GrimmettPercolation1999}) that the model undergoes a non-trivial phase transition as the parameter $p$ varies:  one has $p_c\in(0,1)$ with 
\begin{equation}
	p_c:=\inf\{p\in [0,1]: \mathbb P_p[0\connect{}\infty]>0\},
\end{equation}
where $\{0\connect{}\infty\}$ is the event that the origin lies in an infinite connected component. 

In this paper, we study \emph{high-dimensional} percolation, meaning that we work in dimensions $d>6$. This corresponds to the (conjectured) \emph{mean-field} regime of the model. We refer to \cite{GrimmettPercolation1999,SladeSaintFlourLaceExpansion2006,panis2024applications,hutchcroft2025dimension} and references therein for more information on the particular role played by the dimension $d=6$ in percolation theory. We investigate properties of the critical measure $\mathbb P=\mathbb P_{p_c}$. A fundamental quantity in its analysis is the so-called (restricted) \emph{two-point function}, which is defined as follows: if $A\subset \mathbb Z^d$ and $x,y\in \mathbb Z^d$,
\begin{equation}
	\tau_{A}(x,y):=\mathbb P[x\connect{A\:}y],
\end{equation}
where $\{x\connect{A\:}y\}$ is the event that there exists an \emph{open path} (i.e.\ a path made of edges that were kept) fully contained in $A$ which connects $x$ and $y$. When $A=\mathbb Z^d$, we drop it from the above notation.

The starting point in the study of high-dimensional Bernoulli percolation is the following estimate on the critical two-point function: for every $x,y\in \mathbb Z^d$,
\begin{equation}\tag{$*$}\label{eq: 2pt full space estimate}
	\tau(x,y)\asymp \frac{1}{1+|x-y|^{d-2}},
\end{equation}
where $\asymp$ means that the ratio of the two quantities is bounded away from $0$ and infinity by two constants which only depend on $d$ (and potentially the spread parameter $L$), and where $|\cdot|$ denotes the $\ell^\infty$ norm on $\mathbb R^d$. The \emph{lace expansion} approach developed by Brydges and Spencer \cite{BrydgesSpencerSAW} (see \cite{SladeSaintFlourLaceExpansion2006} for a review) has been successfully implemented to derive a more precise version of \eqref{eq: 2pt full space estimate} for nearest-neighbour percolation in dimensions $d>10$ \cite{HaraSlade1990Perco,HaraDecayOfCorrelationsInVariousModels2008,FitznervdHofstad2017Percod10}, and sufficiently spread-out percolation (i.e. $L\gg 1$) in dimensions $d>6$ \cite{HaraSladevdHofstad2003PercoSO}. An alternative proof of \eqref{eq: 2pt full space estimate} in the latter setting has recently been obtained in \cite{DumPan24Perco}.

We are interested in the behaviour of the critical two-point function restricted to the half-space $\mathbb H:=\{x=(x_1,\dots,x_d) \in \mathbb Z^d :  x_1\ge 0\}$. In this setting, the main difficulty comes from the lack of full translation invariance. Nevertheless, several partial results have been obtained. The first set of results goes back to \cite{ChatterjeeHanson}. 
\begin{Prop}[\hspace{1pt}{\cite{ChatterjeeHanson}}]\label{prop:chatterjeehanson} Let $d>6$ and assume that \eqref{eq: 2pt full space estimate} holds. Then, for every $K\geq 1$, there exist $c,C>0$ such that the following holds:
\begin{enumerate}
	\item[$(a)$] For every $x,y\in \mathbb H$ which satisfy $|x-y|\leq K\min(x_1,y_1)$,
	\begin{equation}\label{CH1}
		\frac{c}{1+|x-y|^{d-2}}\leq \tau_{\mathbb H}(x,y)\leq \frac{C}{1+|x-y|^{d-2}}.
	\end{equation}
	\item[$(b)$] For every $x,y\in \mathbb H$ which satisfy $x_1=0$ and $|x-y|\leq Ky_1$,
	\begin{equation}\label{CH2}
		\frac{c}{1+|x-y|^{d-1}}\leq \tau_{\mathbb H}(x,y)\leq \frac{C}{1+|x-y|^{d-1}}.
	\end{equation}
	\item[$(c)$] For every $x,y \in \mathbb H$ which satisfy $x_1=y_1=0$,
	\begin{equation}\label{CH3}
		\frac{c}{1+|x-y|^{d}}\leq \tau_{\mathbb H}(x,y)\leq \frac{C}{1+|x-y|^{d}}.
	\end{equation}
\end{enumerate}
\end{Prop}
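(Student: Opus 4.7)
The plan is to combine \eqref{eq: 2pt full space estimate} with the BK and FKG inequalities and a first-exit / reflection analysis near the boundary $\partial\mathbb H=\{z_1=0\}$. Heuristically, the extra inverse powers of $|x-y|$ in cases $(b)$ and $(c)$ reflect a Dirichlet-type boundary effect: each endpoint lying on $\partial\mathbb H$ forces open paths to escape the boundary at additional cost, which should translate into one extra power of $|x-y|$ in the decay.

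\textbf{Upper bounds.} Case $(a)$ is immediate from $\tau_{\mathbb H}(x,y)\le\tau(x,y)$ and \eqref{eq: 2pt full space estimate}. For $(b)$, with $x\in\partial\mathbb H$, I would decompose the event $\{x\leftrightarrow y\text{ in }\mathbb H\}$ according to the last vertex $u\in\partial\mathbb H$ visited by an open path from $x$ to $y$; the portion of the path past $u$ then lies in $\{z_1\ge 1\}$, a translate of $\mathbb H$. Applying the BK inequality to these disjoint sub-connections yields
\begin{equation*}
\tau_{\mathbb H}(x,y)\le p_c\sum_{\substack{u\in\partial\mathbb H,\ v\sim u,\\ v_1\ge 1}}\tau_{\mathbb H}(x,u)\,\tau(v,y),
\end{equation*}
and the first factor $\tau_{\mathbb H}(x,u)$ is of case-$(c)$ type. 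Feeding in the case-$(c)$ upper bound $|x-u|^{-d}$ and estimating the resulting convolution produces the claimed $|x-y|^{-(d-1)}$ decay. Case $(c)$ is handled analogously by decomposing at both endpoints (first exit from $\partial\mathbb H$ near $x$ and last return to it near $y$), yielding the double boundary improvement.

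\textbf{Lower bounds.} For $(a)$, the hypothesis $|x-y|\le K\min(x_1,y_1)$ ensures that the ball $B$ of radius $|x-y|/2$ about the midpoint of $[x,y]$ lies inside $\mathbb H$, so it is enough to show $\tau_B(x,y)\gtrsim|x-y|^{-(d-2)}$. This reduces to showing that the cluster realising $\{x\leftrightarrow y\}$ is, with positive probability, confined to a ball of radius $O(|x-y|)$, a concentration property that follows from \eqref{eq: 2pt full space estimate} together with the triangle condition (available in $d>6$). For $(b)$ and $(c)$, matching lower bounds can be produced by combining the pointwise estimates from cases already proven with a second-moment argument over intermediate vertices $z$ satisfying $x\leftrightarrow z$ and $z\leftrightarrow y$ in $\mathbb H$: the first moment is computed using the lower bound from case $(a)$ for $z$ in the bulk, and the second moment is controlled using \eqref{eq: 2pt full space estimate}.

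\textbf{Main obstacle.} The subtlest step is extracting the sharp boundary exponents in $(b)$ and $(c)$: the BK decompositions above produce convolutions of two-point functions over the $(d-1)$-dimensional hyperplane $\partial\mathbb H$, and showing that these sum to the advertised order rather than to a weaker decay requires a careful use of the triangle condition combined with a circularity-free bootstrap between the three cases. A parallel difficulty arises for the lower bound in $(a)$: the confinement of the open cluster to a ball of radius $O(|x-y|)$ is not an immediate consequence of the pointwise estimate \eqref{eq: 2pt full space estimate} alone and calls for an additional finer input on cluster geometry.
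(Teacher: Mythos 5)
This proposition is quoted from Chatterjee--Hanson and is used in the paper as an external input; the paper contains no proof of it, and the actual proof in that reference is a long and delicate argument, so your sketch should be judged on its own terms. Judged that way, it has several genuine gaps, the most serious of which is quantitative. In your upper bound for $(b)$, decomposing at the last visit $u$ to $\partial\mathbb H$ and feeding in the case-$(c)$ bound $\tau_{\mathbb H}(x,u)\lesssim (1+|x-u|)^{-d}$ together with the full-space bound $\tau(v,y)\lesssim (1+|v-y|)^{2-d}$ gives
\begin{equation*}
\sum_{u\in\partial\mathbb H}\frac{1}{(1+|x-u|)^{d}}\cdot\frac{1}{(1+|u-y|)^{d-2}}\asymp \frac{1}{|x-y|^{d-2}},
\end{equation*}
because $\sum_{u\in\mathbb Z^{d-1}}(1+|u|)^{-d}$ converges and the sum is dominated by $u$ near $x$. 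This is off by a full factor of $|x-y|$ from the claimed $|x-y|^{1-d}$. To recover that factor you would need the second leg, which runs from a point on the boundary of the translated half-space $\{z_1\ge 1\}$ to $y$ inside it, to itself satisfy the case-$(b)$ bound --- which is exactly what you are trying to prove. The same circularity infects case $(c)$ (``handled analogously''), and you have in effect deferred the entire content of the proposition to the ``circularity-free bootstrap'' that you list as the main obstacle without supplying it.

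The lower bounds have analogous problems. For $(a)$, the claim that the ball of radius $|x-y|/2$ about the midpoint of $[x,y]$ lies in $\mathbb H$ is false once $K>2$ (take $x_1=y_1=|x-y|/K$), and even after shrinking the ball, the confinement statement $\tau_B(x,y)\gtrsim |x-y|^{2-d}$ is a nontrivial theorem about cluster geometry that you assert rather than prove. For $(b)$ and $(c)$, the second-moment argument you outline needs a first-moment lower bound on $\mathbb P[x\connect{\mathbb H}z]$ for $x$ on the boundary and $z$ in the bulk, which is again the case-$(b)$ lower bound itself; the hypothesis of case $(a)$ never applies when one endpoint sits on $\partial\mathbb H$. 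The mechanism that actually makes such lower bounds work (and which the present paper uses for its own, different, Proposition 2.2) is a quantitative control of boundary pioneer points of the type $\sum_{u\in\partial B_n(x)}\tau_{B_n(x)}(x,u)\asymp (1+\min(x_1,n))/n$, combined with a second-moment or regularity argument over those pioneers; nothing of that sort appears in your sketch.
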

This result identifies three different regimes of decay for $\tau_{\mathbb H}(x,y)$. However, Proposition \ref{prop:chatterjeehanson} does not give any information on how the two-point function interpolates between these regimes. Partial steps in this direction where taken in the subsequent work \cite{ChatterjeeHansonSosoe2023subcritical}. Below, we let $\mathbf{e}_1=(1,0,\ldots,0)$.
\begin{Prop}[\hspace{1pt}{\cite{ChatterjeeHansonSosoe2023subcritical}}]\label{prop:chatterjeehansonsosoe} Let $d>6$ and assume that \eqref{eq: 2pt full space estimate} holds. Then, there exist $c,C>0$ such that the following holds:
\begin{enumerate}
	\item[$(a)$] For every $m\geq 1$, and every $x\in \mathbb H$, 
	\begin{equation}\label{CHS1}
		\tau_{\mathbb H}(x,m\mathbf{e}_1)\leq C\frac{1+m}{1+|x-m\mathbf{e}_1|^{d-1}}.
	\end{equation}
	\item[$(b)$] For every $m\geq 1$, and every $x\in \mathbb H$, if $x_1\geq \tfrac{1}{2}|x|$ and $|x|\geq 4m$, then
	\begin{equation}\label{CHS2}
		\tau_{\mathbb H}(x,m\mathbf{e}_1)\geq c\frac{1+m}{1+|x-m\mathbf{e}_1|^{d-1}}.
	\end{equation}
\end{enumerate}
\end{Prop}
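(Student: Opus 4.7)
My plan is to prove part (a) via a BK (van den Berg--Kesten) decomposition around $m\mathbf{e}_1$, and part (b) via an FKG-based second moment argument; both steps use Proposition \ref{prop:chatterjeehanson} to bound the individual two-point functions that appear.

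\emph{Upper bound (a).} When $1+m \gtrsim |x - m\mathbf{e}_1|$, the bound follows directly from \eqref{eq: 2pt full space estimate}, since $(1+m)/|x - m\mathbf{e}_1|^{d-1} \gtrsim 1/|x - m\mathbf{e}_1|^{d-2}$. In the main regime $1+m \ll |x - m\mathbf{e}_1|$, I would apply the BK inequality by cutting at the first entry of the connecting path into the ball $B = B(m\mathbf{e}_1, r)$ of radius $r \asymp 1+m$, giving
\begin{equation*}
\tau_{\mathbb H}(x, m\mathbf{e}_1) \leq \sum_{\substack{u \in \mathbb H \setminus B,\; v \in \mathbb H \cap B\\ u \sim v}} \tau_{\mathbb H}(x, u)\, p\, \tau_{\mathbb H \cap B}(v, m\mathbf{e}_1).
\end{equation*}
I would then split the sum over $\partial B$ according to whether $u$ is near $\partial \mathbb H$ (i.e.\ $u_1 \leq r/2$) or in the bulk ($u_1 > r/2$). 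In the near-boundary part, $\tau_{\mathbb H}(x, u)$ is bounded using parts (b)/(c) of Proposition \ref{prop:chatterjeehanson}, after relating $u$ to the nearby boundary point $(0, u_2, \ldots, u_d) \in \partial \mathbb H$; this step is what generates the extra factor $1/|x|$ relative to the naive Simon--Lieb bound. The inner factor $\tau_{\mathbb H \cap B}(v, m\mathbf{e}_1)$ is controlled by \eqref{eq: 2pt full space estimate} in the bulk of $B$ and by parts (b)/(c) of Proposition \ref{prop:chatterjeehanson} near $\partial \mathbb H$. Careful bookkeeping then assembles these contributions into $C(1+m)/|x - m\mathbf{e}_1|^{d-1}$.

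\emph{Lower bound (b).} Let $S = B(0, C(1+m)) \cap \partial \mathbb H$ be a boundary source layer and $T = B(x, |x|/10) \cap \{y_1 \geq |x|/3\}$ a bulk target near $x$. Define
\begin{equation*}
N := \sum_{z \in S} \mathbf{1}\bigl[m\mathbf{e}_1 \connect{\mathbb H} z \text{ and } z \connect{\mathbb H} x\bigr].
\end{equation*}
By FKG and Proposition \ref{prop:chatterjeehanson}(b), $\mathbb E[N] \gtrsim |S| \cdot (1+m)^{-(d-1)} \cdot |x|^{-(d-1)} \asymp |x|^{-(d-1)}$. The additional factor $(1+m)$ is extracted by noting that the cluster of $m\mathbf{e}_1$ contains, with positive probability and by Proposition \ref{prop:chatterjeehanson}(a), a ``ball'' of $\asymp 1+m$ bulk relays $w$ at height $\asymp 1+m$, each of which independently contributes a comparable connection to $S$. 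A tree-graph upper bound on $\mathbb E[N^2]$, combined with Paley--Zygmund, yields $\mathbb P[N \geq 1] \gtrsim (1+m)/|x|^{d-1}$. Since $\{N \geq 1\} \subseteq \{m\mathbf{e}_1 \connect{\mathbb H} x\}$ (the connection goes through any witness $z$), the claimed lower bound follows.

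The main obstacle is extracting the $(1+m)$ factor sharply in both parts. In part (a), the naive BK decomposition gives only $(1+m)/|x|^{d-2}$, and recovering the extra $1/|x|$ hinges on using the boundary-enhanced decay of Proposition \ref{prop:chatterjeehanson}(b) for near-boundary $u$, together with a careful flux count through $\partial B$. In part (b), the $(1+m)$ enhancement reflects the $(d-1)$-dimensional thickness of the effective source layer of $m\mathbf{e}_1$ as seen from distance $|x|$; making this quantitative requires a sufficiently sharp tree-graph control of $\mathbb E[N^2]$ so that the diagonal contribution does not dominate and erase the enhancement.
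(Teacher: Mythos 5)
Note first that the paper does not prove this proposition: it is quoted from Chatterjee--Hanson--Sosoe, where it is the main result of a substantial paper, so there is no internal proof to compare against. Judged on its own merits, your proposal has a genuine gap in each part, and in both cases it is the same gap: the inputs you allow yourself (Proposition \ref{prop:chatterjeehanson} and \eqref{eq: 2pt full space estimate}) say nothing sharp about $\tau_{\mathbb H}(u,w)$ when one endpoint sits at intermediate height, $1\ll u_1\ll |u-w|$ --- and that is exactly the regime your decompositions produce.

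For the upper bound (a), after cutting around $B=B_r(m\mathbf{e}_1)$ with $r\asymp 1+m$ (and note that to get the restricted factor $\tau_{B}(v,m\mathbf{e}_1)$ you must cut at the first vertex after which the path stays in $B$, as in \eqref{upperboundbyBK}, not at the first entrance), every $u\in\partial B$ has $u_1=O(m)$ while $|x-u|\asymp R:=|x-m\mathbf{e}_1|\gg m$. Proposition \ref{prop:chatterjeehanson}$(a)$ requires $|x-u|\le K\min(x_1,u_1)$ and parts $(b)$--$(c)$ require an endpoint exactly on $\partial\mathbb H$ plus a bulk condition on the other, so none of them applies to $\tau_{\mathbb H}(x,u)$ here; the only available bound is $\tau_{\mathbb H}(x,u)\lesssim R^{-(d-2)}$. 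Summing over the $\asymp r^{d-1}$ points of the "bulk" part of $\partial B$, each with inner factor $\lesssim r^{-(d-2)}$, already yields $(1+m)R^{-(d-2)}$, which is a full factor of $R$ above the target. The "careful bookkeeping" therefore cannot close: the missing ingredient is precisely a boundary-enhanced upper bound for points at height $\asymp m$, i.e.\ essentially the statement being proved.

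For the lower bound (b), the second-moment method is structurally incapable of delivering the factor $1+m$ with your observable. Since $N$ is integer-valued, the diagonal of $\mathbb E[N^2]$ equals $\mathbb E[N]$, so
\begin{equation*}
\mathbb P[N\ge 1]\;\ge\;\frac{\mathbb E[N]^2}{\mathbb E[N^2]}\;\le\;\mathbb E[N],
\end{equation*}
and your FKG computation only certifies $\mathbb E[N]\gtrsim |x|^{-(d-1)}$; Paley--Zygmund can then never produce more than $\mathbb E[N]$, so the conclusion $\mathbb P[N\ge1]\gtrsim (1+m)|x|^{-(d-1)}$ does not follow. The "$\asymp 1+m$ bulk relays, each contributing independently" step is the place where the factor is supposed to appear, but a first moment is additive, not multiplicative in the number of relays, and to make relays at height $\asymp 1+m$ useful you would need a lower bound on $\tau_{\mathbb H}(w,x)$ for $w$ at height $\asymp m$ and $|w-x|\asymp|x|$ --- again the missing intermediate regime. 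Both halves of the proposal thus reduce to the proposition itself (or to Theorem \ref{thm:main}) rather than to the stated inputs.
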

\begin{Rem} Some of the estimates stated in Propositions \ref{prop:chatterjeehanson} and \ref{prop:chatterjeehansonsosoe} have recently been derived in the context of spread-out percolation \cite{DumPan24Perco} (and also in the context of the weakly-self avoiding walk model \cite{DumPan24WSAW}) using very different methods.
\end{Rem}

The estimates of Proposition \ref{prop:chatterjeehansonsosoe} are inefficient in the situation were both $x$ and $y$ lie near the boundary of $\mathbb H$. By analogy with Green function estimates (see e.g \cite{LawlerLimicRandomWalks2010}), Hutchcroft, Michta, and Slade \cite[Remark~3.4]{HutchcroftMichtaSladePercolationTorusPlateau2023} conjectured a behaviour for $\tau_{\mathbb H}(x,y)$ in the regime where $\max(x_1,y_1)\leq|x-y|$. Our main result is a proof of their conjecture. It provides a sharp (up-to-constant) estimate on $\tau_{\mathbb H}(x,y)$ for every $x,y\in \mathbb H$. We will need the following notation: if $x,y\in \mathbb H$, we let $r_{x,y}:=\min(x_1,|x-y|)$.
\begin{Thm} \label{thm:main} Let $d>6$ and assume that \eqref{eq: 2pt full space estimate} holds. Then, there exist $c,C>0$ such that, for every $x,y\in \mathbb H$, \begin{equation} 
c\frac { (1+ r_{x,y}) \cdot (1+ r_{y,x})}{1+ |x-y|^d}\leq \tau_{\mathbb H} (x,y )\leq C\frac { (1+ r_{x,y}) \cdot (1+ r_{y,x})}{1+ |x-y|^d}.
\end{equation}
\end{Thm}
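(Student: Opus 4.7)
The plan is to use the symmetry $\tau_{\mathbb H}(x,y) = \tau_{\mathbb H}(y,x)$ and the translation invariance of $\mathbb H$ in the directions orthogonal to $\mathbf e_1$ to reduce to the case $x_1 \le y_1$ and $x = x_1 \mathbf e_1$, so that $x$ lies on the first coordinate axis. Set $R := |x-y|$. Three regimes then remain: (A) $R \le x_1$; (B) $x_1 \le R \le y_1$; (C) $y_1 \le R$. In regime (A), the target reduces to $\tau_{\mathbb H}(x,y) \asymp R^{-(d-2)}$, which is Proposition~\ref{prop:chatterjeehanson}\,(a) (lower bound) combined with \eqref{eq: 2pt full space estimate} and $\tau_{\mathbb H} \le \tau$ (upper bound). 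In regime (B), the target is $\tau_{\mathbb H}(x,y) \asymp (1+x_1)/R^{d-1}$: since $x = x_1\mathbf e_1$ is on the axis, Proposition~\ref{prop:chatterjeehansonsosoe}\,(a) with $m=x_1$ gives the upper bound, and Proposition~\ref{prop:chatterjeehansonsosoe}\,(b) (after splitting off a bounded sub-regime of small $y_1/x_1$) gives the matching lower bound. The only genuinely new content is thus regime (C), where both endpoints are within distance $R$ of $\partial \mathbb H$ and the target is $(1+x_1)(1+y_1)/R^d$.

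For the upper bound in regime (C), Proposition~\ref{prop:chatterjeehansonsosoe}\,(a) yields only $C(1+x_1)/R^{d-1}$, and one must extract an additional factor $(1+y_1)/R$. Heuristically this factor is the half-space Poisson-kernel weight: a random walk from the bulk enters a small ball around $y$ with probability $\asymp y_1/R$ rather than escaping through $\partial\mathbb H$. I would implement this via a Simon--Lieb-type cutset decomposition across a half-sphere $\mathcal S$ of radius $\asymp y_1$ centred at $y$, giving $\tau_{\mathbb H}(x,y) \le \sum_{(z,w) \in \partial^e \mathcal S} \tau_{\mathbb H}(x,z) \, p_c \, \tau_{\mathbb H}(w,y)$. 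The factor $\tau_{\mathbb H}(x,z)$ is bounded by Proposition~\ref{prop:chatterjeehansonsosoe}\,(a) (contributing $(1+x_1)/R^{d-1}$), and $\tau_{\mathbb H}(w,y)$ by Proposition~\ref{prop:chatterjeehanson}\,(a)--(c) according to the height of $w$. The surface sum, whose convergence uses $d > 6$, should then produce exactly the missing $(1+y_1)/R$.

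For the lower bound in regime (C), a single-intermediate-point FKG argument is too lossy by a factor $R^{d-2}$ because the triangle $x \to z \to y$ doubles the total path length. I would instead use a Paley--Zygmund bound on $N := \#\{z \in \Lambda : x \connect{\mathbb H} z \text{ and } z \connect{\mathbb H} y\}$, where $\Lambda$ is a bulk box of side $\asymp R$ placed so that every $z \in \Lambda$ satisfies $z_1 \asymp R$ and $|x-z|, |z-y| \asymp R$. Since $\{N \ge 1\} \subseteq \{x \connect{\mathbb H} y\}$, this gives $\tau_{\mathbb H}(x,y) \ge (\mathbb E N)^2/\mathbb E N^2$. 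The first moment is bounded below by FKG and Proposition~\ref{prop:chatterjeehansonsosoe}\,(b), yielding $\mathbb E N \gtrsim (1+x_1)(1+y_1)/R^{d-2}$; the second moment is controlled from above by a tree-graph / square-diagram sum, summable exactly when $d > 6$.

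The main obstacle I anticipate is the upper bound in regime (C): extracting the second $(1+y_1)/R$ factor without recourse to an exact reflection principle requires a careful cutset decomposition, and the resulting convolution must collapse back to scale $R^{-d}$ rather than $R^{-(d-1)}$. The hypothesis $d>6$ enters precisely to ensure the relevant convolution of two-point functions converges on the appropriate surface and to control the second-moment diagram.
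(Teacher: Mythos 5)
Your reduction to the three regimes is sound, and regimes (A) and (B) do follow from Propositions~\ref{prop:chatterjeehanson} and~\ref{prop:chatterjeehansonsosoe} essentially as you describe (the paper in fact treats all regimes uniformly, but your case split is legitimate). Both halves of your treatment of regime (C), however, have genuine gaps. For the upper bound, a Simon--Lieb cut at a half-sphere of radius $\asymp y_1$ around $y$ cannot produce the factor $(1+y_1)/R$. The surface sum $\sum_{w}\tau_{\mathbb H}(w,y)$ over that sphere is $\asymp 1+y_1$ by Proposition~\ref{prop:chatterjeehanson} (there are $\asymp y_1^{d-1}$ points at distance $\asymp y_1$ contributing $\asymp y_1^{2-d}$ each), and it is $\asymp 1$ if you restrict the inner two-point function to the ball as Simon--Lieb requires; in neither case does the scale $R$ enter, so you only recover $(1+x_1)/R^{d-1}$ or $(1+x_1)(1+y_1)/R^{d-1}$, i.e.\ nothing beyond Proposition~\ref{prop:chatterjeehansonsosoe}(a). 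The missing factor comes from cutting at scale $R$ and using the \emph{restricted} pioneer estimate $\sum_{u\in\partial B_{n}(y)}\tau_{B_{n}(y)}(y,u)\lesssim(1+\min(y_1,n))/n$ with $n=\lfloor R/3\rfloor$ (Proposition~\ref{prop.phik}, inequality~\eqref{eq:pioneer1}). This is a genuinely new estimate: only the case $y_1=0$ was available from Chatterjee--Hanson, and extending it to all $y_1$ requires a further decomposition combined with~\eqref{CHS1}. Your proposal contains no substitute for it, and without it the upper bound in regime (C) does not close.

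For the lower bound, the Paley--Zygmund argument on $N=\#\{z\in\Lambda:x\connect{\mathbb H\:}z,\ z\connect{\mathbb H\:}y\}$ fails quantitatively. You would need $\mathbb E[N^2]\lesssim(\mathbb E N)^2R^d/((1+x_1)(1+y_1))\asymp(1+x_1)(1+y_1)R^{4-d}$, but the tree diagrams with both internal vertices $w_1,w_2$ in the bulk near $\Lambda$ already contribute $\asymp(1+x_1)(1+y_1)R^{8-d}$: the two legs to $x$ and $y$ give $(1+x_1)(1+y_1)R^{2-2d}$ via~\eqref{CHS1}, each sum $\sum_{z\in\Lambda}\tau(w_i,z)$ gives $R^2$, and $\sum_{w_1,w_2}\tau(w_1,w_2)$ over the relevant region gives $R^{d+2}$. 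This factor-$R^4$ loss is not an artifact of crude bounding: conditionally on $\{x\connect{\mathbb H\:}y\}$, the number of bulk vertices attached to the common cluster is not concentrated (critical cluster volumes have heavy tails), so $\mathbb E[N^2]\gg(\mathbb E N)^2/\tau_{\mathbb H}(x,y)$ and the method is intrinsically too lossy. This is precisely why the paper's lower bound goes through the reversed BK inequality of Proposition~\ref{prop:reversedBK}: one first grows the clusters of $x$ and $y$ to scale $R/3$, with the pioneer lower bound~\eqref{eq:pioneer2} supplying the factors $(1+x_1)(1+y_1)/R^2$, and then connects the two boundary sets by a second-moment method applied to probability measures supported on \emph{regular} boundary points, where the Kozma--Nachmias regularity theory guarantees that the $(d-4)$-capacity of the boundary set is comparable to its cardinality (Lemmas~\ref{lem:2sets}, \ref{Prop:regpoints}, and~\ref{lem:lowercap}). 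Your proposal is missing this entire mechanism.
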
 
\begin{Rem} It is interesting to compare our result with Proposition \ref{prop:chatterjeehansonsosoe}. The latter result can be rephrased as follows: if $d>6$ and \eqref{eq: 2pt full space estimate} holds, then there exist $c,C>0$ such that, for every $x,y\in \mathbb H$,
\begin{equation}
	\tau_{\mathbb H}(x,y)\leq C\frac{1+\min(r_{x,y},r_{y,x})}{1+|x-y|^{d-1}},
\end{equation}
and, assuming (for instance) that $x_1\geq \frac{1}{2}|x|$ and $|x|\geq 4|y|$, 
\begin{equation}
		\tau_{\mathbb H}(x,y)\geq c\frac{1+y_1}{1+|x-y|^{d-1}}.
\end{equation}
Therefore, Theorem \ref{thm:main} corresponds to Proposition \ref{prop:chatterjeehansonsosoe} in the regime where $r_{x,y}\asymp |x-y|$.
\end{Rem}
As an immediate corollary of Theorem \ref{thm:main}, we obtain an alternative (short and easy) proof of \cite[Proposition~3.1]{HutchcroftMichtaSladePercolationTorusPlateau2023} (which motivated \cite[Remark~3.4]{HutchcroftMichtaSladePercolationTorusPlateau2023}).
For every $n\in \mathbb Z$, let $\mathbb H_n:=\mathbb H-n\mathbf{e}_1$. 
\begin{Coro}\label{coro: pionneers} Let $d>6$ and assume that \eqref{eq: 2pt full space estimate} holds. Then, there exists $C>0$ such that, for every $n\geq 0$,
\begin{equation}
	\varphi_{p_c}(\mathbb H_n):=p_c\sum_{\substack{x\in \mathbb H_n\\y\notin \mathbb H_n\\x\sim y}}\tau_{\mathbb H_n}(0,x)\leq C.
\end{equation}
\end{Coro}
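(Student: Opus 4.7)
The plan is to reduce to Theorem \ref{thm:main} by translating the problem so that $\mathbb H_n$ becomes $\mathbb H$ and the origin becomes $n\mathbf{e}_1$, and then to perform a routine geometric sum.

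First, by translation invariance of the percolation measure, $\tau_{\mathbb H_n}(0,x)=\tau_{\mathbb H}(n\mathbf{e}_1,\,x+n\mathbf{e}_1)$. Reindexing via $x'=x+n\mathbf{e}_1$, $y'=y+n\mathbf{e}_1$, one rewrites
\begin{equation*}
\varphi_{p_c}(\mathbb H_n)=p_c\sum_{\substack{x'\in\mathbb H,\,y'\notin\mathbb H\\ x'\sim y'}}\tau_{\mathbb H}(n\mathbf{e}_1,x').
\end{equation*}
For the nearest-neighbour model this reduces to summing $\tau_{\mathbb H}(n\mathbf{e}_1,x')$ over $x'$ with $x'_1=0$ (each boundary vertex contributing exactly one exit edge); in the spread-out case the boundary thickens to $\{0\leq x'_1<L\}$ and each such vertex has at most $O(L^{d-1})$ exit edges, which affects only constants.

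For such a boundary point $x'$ and $y:=n\mathbf{e}_1$, one has $r_{x',y}=\min(x'_1,|x'-y|)=O(1)$, while $|x'-y|\geq n-O(1)$ forces $r_{y,x'}=\min(n,|x'-y|)\asymp n$. Theorem \ref{thm:main} then gives
\begin{equation*}
\tau_{\mathbb H}(n\mathbf{e}_1,x')\leq C\,\frac{1+n}{1+|x'-n\mathbf{e}_1|^d},
\end{equation*}
uniformly in the boundary point $x'$. It then suffices to bound the resulting sum by a constant. Writing $x'=(0,z)$ with $z\in\mathbb Z^{d-1}$, so that $|x'-n\mathbf{e}_1|=\max(n,|z|)$ in the $\ell^\infty$ norm, the sum splits into the near-field contribution $O(n^{d-1})\cdot(1+n)/(1+n^d)=O(1)$ from $|z|\leq n$, and the far-field contribution $(1+n)\sum_{k>n}k^{d-2}/k^d\asymp 1$ from $|z|>n$.

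There is no genuine obstacle beyond plugging in the right estimate: the entire content sits in Theorem \ref{thm:main}, which is calibrated precisely so that this boundary regime $r_{x',y}=O(1)$, $r_{y,x'}\asymp n$ produces a summable profile. The mild thickening of the boundary in the spread-out setting is absorbed into the constants without changing the argument.
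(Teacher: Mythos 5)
Your proposal is correct and follows essentially the same route as the paper: reduce to a single half-space by translation invariance, apply Theorem \ref{thm:main} in the regime where one point is on the boundary (so one factor $1+r$ is $O(1)$ and the other is $O(1+n)$), and conclude by the geometric sum over the boundary layer. The paper's version is terser (it translates so that the origin lands on the boundary of $\mathbb H$ rather than at depth $n$, and leaves the final summation implicit), but the two arguments are interchangeable.
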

\begin{Rem} $(i$) This result was also derived in \cite{DumPan24Perco} in the context of spread-out percolation, and in \cite{DumPan24WSAW} in the context of the weakly self-avoiding walk model.

$(ii)$ Corollary \ref{coro: pionneers} implies the uniform boundedness of the expected number of critical \emph{pioneers} of half-spaces.
\end{Rem}
\begin{proof}[Proof of Corollary \textup{\ref{coro: pionneers}}] By translation invariance, one has
\begin{equation}\label{eq:proof coro 1}
	\varphi_{p_c}(\mathbb H_n)=p_c\sum_{\substack{x\in  \mathbb H_{-n}\\y\notin \mathbb H_{-n}\\x\sim y }}\tau_{\mathbb H}(0,x).
\end{equation}
By Theorem \ref{thm:main}, there exists $C_1>0$ such that, for every $x\in \mathbb H$
\begin{equation}\label{eq:proof coro 2}
	\tau_{\mathbb H}(0,x)\leq C_1\frac{1+\min(x_1,|x|)}{1+|x|^d}.
\end{equation}
Plugging \eqref{eq:proof coro 2} in \eqref{eq:proof coro 1} concludes the proof.
\end{proof}
Finally, let us mention that the upper bound in Theorem \ref{thm:main} is also useful in the recent \cite{Asselah2025capacity} (see Lemma 4.3 there).

\paragraph{Notations.} We let $\Vert \cdot \Vert$ (resp. $|\cdot |$) denote the standard Euclidean norm (resp. the $\ell^\infty$ norm) on $\mathbb R^d$. If $f,g>0$, we write $f\lesssim g$ (or $g\gtrsim f$) if there exists $C>0$, which only depends on $d$ (and potentially the spread parameter $L$) such that $f\leq Cg$. If $f\lesssim g$ and $g\lesssim f$, we write $f\asymp g$.

Given $A,B,C\subset \mathbb Z^d$, we write $\{A\connect{C\:}B\}$ for the event that there exists an open path in $C$ connecting $A$ and $B$, and we omit the superscript $C$, when $C= \mathbb Z^d$. 

We now introduce various ``half-space notations''. Observe that these notations are slightly different from the standard ones. Given a subset $A\subset \mathbb Z^d$, we define the inner boundary of $A$ in $\mathbb H$,
\begin{equation}
\partial A := \{z \in A : \exists z'\sim z \text{ with }z'\in \mathbb H \cap A^c\},
\end{equation} where we recall that $z'\sim z$ means that $z$ and $z'$ are neighbors in the graph $\mathcal G$ under consideration.  For $z\in \mathbb H$, and $r\ge 0$, we denote the box of radius $r$ centered at $z$ in $\mathbb H$ as 
\begin{equation}
B_r(z) = \{y\in \mathbb H : |y-z |\le r\},
\end{equation}
and just write $B_r$ when $z$ is the origin.

\paragraph{The van den Berg--Kesten inequality.} If $E$ and $F$ are two percolation events, we write $E\circ F$ for the event of \emph{disjoint} occurrence of $E$ and $F$, that is, the event that there exist two disjoint sets $\mathcal I$ and $\mathcal J$ of edges such that the configuration restricted to $\mathcal I$ (resp.~$\mathcal J$) is sufficient to decide that $E$ (resp.~$F$) occurs. The van den Berg--Kesten (BK) inequality (see \cite[Section~2.3]{GrimmettPercolation1999}) states that for two \emph{increasing} events (i.e.~events that are stable under the action of opening edges) $E$ and $F$, one has
\begin{equation}\label{eq:BK ineq}
\mathbb P[E\circ F]\le \mathbb P[E]\mathbb P[F].\tag{BK}
\end{equation}

\section{Proof of Theorem~\ref{thm:main}}
In the rest of the paper, we work either in the nearest-neighbour or in the spread-out setting (with spread parameter $L\geq 1$). Additionally, we assume that $d>6$ and that \eqref{eq: 2pt full space estimate} holds. The proof of Theorem \ref{thm:main} is based on Propositions \ref{prop:chatterjeehanson} and \ref{prop:chatterjeehansonsosoe} and on two new ingredients: Propositions~\ref{prop:reversedBK} and~\ref{prop.phik}. We state these results here and prove them in later sections. 

We observe the following consequence of the BK inequality: for every $x,y\in \mathbb H$, letting $n = \lfloor|x-y|/3 \rfloor$ and assuming that $n\geq 1$ (resp. $n\geq L$ in the spread-out case), one has
\begin{equation}\label{upperboundbyBK}
\tau_{\mathbb H}(x,y) \le \sum_{u\in \partial B_n(x)} \sum_{v\in \partial B_n(y)} \tau_{B_n(x)}(x,u)\cdot \tau_{\mathbb H}(u,v) \cdot \tau_{B_n(y)}(v,y). 
\end{equation}
Indeed, exploring an open self-avoiding path $\gamma$ from $x$ to $y$, and decomposing it according to the last vertex $u$ visited by $\gamma$ before exiting $B_n(x)$, and the first vertex $v$ such that the restriction of $\gamma$ to the portion between $v$ and $y$ lies in $B_n(y)$ gives
\begin{equation}
	\{x\connect{\mathbb H\:}y\}\subset \bigcup_{u\in \partial B_n(x)}\bigcup_{v\in \partial B_n(y)}\{x\connect{B_n(x)\:}u\}\circ \{u\connect{\mathbb H\:}v\}\circ \{v\connect{B_n(y)\:}y\}.
\end{equation}
See Figure \ref{fig:decomp} for an illustration.
Using a union bound and \eqref{eq:BK ineq} gives \eqref{upperboundbyBK}. 
\begin{figure}[htb]
	\begin{center}
		\includegraphics[scale=1]{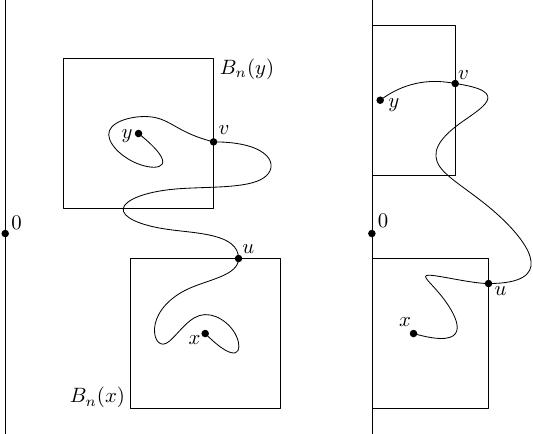}
		\caption{An illustration of the decomposition used to obtain \eqref{upperboundbyBK} (in the nearest-neighbour case). The black bold path represents an open self-avoiding path from $x$ to $y$. Depending on the values of $x_1$ and $y_1$, the boxes $B_n(x)$ and $B_n(y)$ may ``touch'' the boundary of $\mathbb H$. The reversed inequality of Proposition \ref{prop:reversedBK} decomposes paths from $x$ to $y$ similarly, except that there is an additional restriction to vertices $u$ and $v$ satisfying $u_1,v_1\geq \varepsilon n$.}
		\label{fig:decomp}
	\end{center}
\end{figure}

 Our first result provides a reversed inequality (see also Remark~\ref{rem:rem} below) up to some small multiplicative constant. Given $\varepsilon\in (0,1)$, $r\ge 0$ and $x\in \mathbb H$, we write 
\begin{equation}\label{Brepsilon}
\partial B_r^\varepsilon(x) = \{u\in \partial B_r(x) : u_1\ge \varepsilon r\}.
\end{equation}
\begin{Prop}\label{prop:reversedBK}
For every $\varepsilon \in (0,1/2)$, there exist $c,n_0>0$, such that for every $x,y \in \mathbb H$, letting $n  = \lfloor|x-y|/3\rfloor$ and assuming that $n\geq n_0$, one has 
\begin{equation}
\tau_{\mathbb H}(x,y) \ge c \sum_{ u\in \partial B_n^\varepsilon(x)} \sum_{v\in \partial B_n^\varepsilon(y)} \tau_{B_n(x)}(x,u) \cdot \tau_{\mathbb H}(u,v) \cdot \tau_{B_n(y)}(v,y).
\end{equation} 
\end{Prop}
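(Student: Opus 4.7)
The plan is to mirror the BK decomposition producing the upper bound \eqref{upperboundbyBK} and turn it into a second moment argument, closing the estimate with the sharp two-point information from Proposition \ref{prop:chatterjeehanson}, the full-space estimate \eqref{eq: 2pt full space estimate}, and the $d>6$ triangle condition. For each pair $(u,v) \in \partial B_n^\varepsilon(x) \times \partial B_n^\varepsilon(y)$, introduce the event $\mathcal E_{u,v}$ asking for three open paths on disjoint edge sets: $x\leftrightarrow u$ using edges of $B_n(x)$, $u\leftrightarrow v$ using edges of $R := \mathbb H \setminus (B_n(x)^\circ \cup B_n(y)^\circ)$, and $v\leftrightarrow y$ using edges of $B_n(y)$. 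Partitioning the edges according to whether at least one endpoint lies in $B_n(x)^\circ$, in $B_n(y)^\circ$, or in neither, makes these three connection events independent, so that
\begin{equation*}
\mathbb P[\mathcal E_{u,v}] = \tau_{B_n(x)}(x,u)\cdot \tau_R(u,v)\cdot \tau_{B_n(y)}(v,y),
\end{equation*}
and $\mathcal E_{u,v} \subset \{x \connect{\mathbb H\:} y\}$. Letting $N := \sum_{u,v} \mathbf{1}_{\mathcal E_{u,v}}$, Cauchy--Schwarz yields $\tau_{\mathbb H}(x,y) \geq \mathbb P[N>0] \geq (\mathbb E[N])^2/\mathbb E[N^2]$.

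The first step is to replace $\tau_R$ by $\tau_{\mathbb H}$ in the expression for $\mathbb E[N]$, which amounts to showing $\tau_R(u,v) \gtrsim \tau_{\mathbb H}(u,v)$ under the interior condition $u_1,v_1 \geq \varepsilon n$. The defect probability that every open connection $u\leftrightarrow v$ in $\mathbb H$ enters the forbidden region $B_n(x)^\circ \cup B_n(y)^\circ$ decomposes at a first touching vertex $w$, and the resulting BK bound $\sum_w \tau_{\mathbb H}(u,w) \tau_{\mathbb H}(w,v)$ can be controlled using Proposition~\ref{prop:chatterjeehanson}(a) in the relevant interior regime. The condition $u_1 \geq \varepsilon n$ is precisely what places both $u$ and $v$ in the full-space-like regime of that proposition, so the convolutions evaluate as in $\mathbb Z^d$ and are of lower order thanks to the convergence of triangles in $d>6$. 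With this comparison in hand, $\mathbb E[N]$ is comparable to the sum appearing on the right-hand side of the proposition.

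The core step is to establish $\mathbb E[N^2] \lesssim \mathbb E[N]$. By independence across the three regions,
\begin{equation*}
\mathbb P[\mathcal E_{u,v} \cap \mathcal E_{u',v'}] = \mathbb P[E_u \cap E_{u'}] \cdot \mathbb P[F_{u,v} \cap F_{u',v'}] \cdot \mathbb P[G_v \cap G_{v'}],
\end{equation*}
where $E_u, F_{u,v}, G_v$ denote the three connection events. Each two-arm factor is bounded by the standard common-vertex BK decomposition, e.g.\ $\mathbb P[E_u \cap E_{u'}] \leq \sum_w \tau_{B_n(x)}(x,w) \tau_{B_n(x)}(w,u) \tau_{B_n(x)}(w,u')$, and analogously for the other two. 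Summing these diagrams over $(u,v,u',v')$ and inserting the sharp half-space two-point bounds provided by Proposition~\ref{prop:chatterjeehanson}, together with \eqref{eq: 2pt full space estimate}, reduces the task to controlling triangle-type convolutions of $\tau_{\mathbb H}$ and its box-restricted versions; the $d>6$ triangle condition makes every such sum uniformly bounded and collapses the whole second moment to a constant multiple of $\mathbb E[N]$.

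The main obstacle is the diagrammatic bookkeeping in this second moment step: the common vertices produced by the BK decomposition can lie anywhere in $\mathbb H$, and the half-space two-point function obeys different decay in each of the three regimes of Proposition~\ref{prop:chatterjeehanson}. A careful geometric case split---separating the contributions according to the positions of the joint vertices relative to $\{x_1 = 0\}$ and to the boxes $B_n(x), B_n(y)$---is needed to guarantee that each piece of the double sum is absorbed into $\mathbb E[N]$ without producing extra polynomial factors. The interior restriction $u_1, v_1 \geq \varepsilon n$ is exactly what rules out pathological near-boundary configurations, while $d>6$ provides the convergence of the triangle diagrams that closes the estimate.
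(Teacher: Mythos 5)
Your overall strategy (a one-shot second moment over pioneer pairs $(u,v)$) is the natural first attempt, but both of its key steps fail as written, and the failure is precisely the reason the paper routes the argument through the Kozma--Nachmias regularity machinery and the $(d-4)$-capacity bound of Lemma \ref{lem:2sets}. First, the comparison $\tau_R(u,v)\gtrsim \tau_{\mathbb H}(u,v)$ cannot be obtained by a first-entrance decomposition over the forbidden region: that region is the full interior of a box of radius $n$, so the resulting bound is
\begin{equation*}
\sum_{w\in B_n(x)^\circ}\tau_{\mathbb H}(u,w)\,\tau_{\mathbb H}(w,v)\;\asymp\; n^{2-d}\sum_{w\in B_n(x)}\tau(u,w)\;\asymp\; n^{2}\cdot n^{2-d},
\end{equation*}
which exceeds the quantity $\tau_{\mathbb H}(u,v)\asymp n^{2-d}$ you are trying to preserve by a factor $n^{2}$. (This is why the paper only asks the connecting path to avoid the \emph{clusters} $A\cup B$, whose regularized sparsity makes the analogous sum $O(K^{-1/2}n^{2-d})$, rather than the boxes themselves.)

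Second, the claim that ``the $d>6$ triangle condition collapses $\mathbb E[N^2]$ to a constant multiple of $\mathbb E[N]$'' is where the real difficulty is hidden, and it is not a triangle-condition statement. The dominant off-diagonal contribution comes from configurations in which the two middle connections merge at a vertex $s$ near $B_n(x)$ and split at a vertex $t$ near $B_n(y)$; after summing over $s,t$ it is controlled by $n^{2-d}\,\Phi(x)\,\Phi(y)$ with
\begin{equation*}
\Phi(x):=\sum_{u,u'\in\partial B_n^\varepsilon(x)}\mathbb P\big[x\connect{B_n(x)\:}u,\; x\connect{B_n(x)\:}u'\big]\,(1+\|u-u'\|)^{4-d},
\end{equation*}
and closing the second moment requires $\Phi(x)\lesssim \tfrac{1+\min(x_1,n)}{n}=\mathbb E[X_n^\varepsilon(x)]$. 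This says that the $(d-4)$-energy of the (random) pioneer set is comparable to its cardinality in expectation, i.e.\ that pioneers do not clump --- exactly the content of Lemmas \ref{Prop:regpoints} and \ref{lem:lowercap}, which you never prove or invoke. Moreover, estimating $\Phi(x)$ by the tree-graph bound $\mathbb P[E_u\cap E_{u'}]\le\sum_w\tau_{B_n(x)}(x,w)\tau_{B_n(x)}(w,u)\tau_{B_n(x)}(w,u')$ together with the \emph{a priori} inputs you cite (Propositions \ref{prop:chatterjeehanson}, \ref{prop:chatterjeehansonsosoe} and \eqref{eq: 2pt full space estimate}) yields only $\Phi(x)\lesssim n^{8-d}$, which diverges for $d=7,8$; getting the right order would already require the sharp upper bound $\tau_{\mathbb H}(w,u)\lesssim (1+r_{w,u})(1+r_{u,w})/|w-u|^{d}$ of Theorem \ref{thm:main} itself, and even then only matches the target in the bulk regime $x_1\gtrsim n$. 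So the proposal is missing the central idea (regularity/capacity of the pioneer sets) and the step it replaces it with would fail.
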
 
The second important new ingredient is the following estimate. 
\begin{Prop} \label{prop.phik}	
For every $n\geq 1$ and every $x\in \mathbb H$,  
	\begin{equation}\label{eq:pioneer1} 
	\sum_{u\in \partial B_n(x)} \tau_{B_n(x)}(x,u) \lesssim \frac {1+\min(x_1,n)} n. 
	\end{equation}
	Furthermore, there exists $\varepsilon\in (0,1/2)$, such that, for every $n\geq 1$ and every $x\in \mathbb H$, 
	\begin{equation}\label{eq:pioneer2}
	\sum_{u\in \partial B_n^\varepsilon(x)} \tau_{B_n(x)}(x,u) \gtrsim \frac {1+\min(x_1,n)} n. 
	\end{equation} 
\end{Prop}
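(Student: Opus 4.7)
I plan to prove the upper bound \eqref{eq:pioneer1} and the lower bound \eqref{eq:pioneer2} separately, reducing both to the half-space estimates of Propositions~\ref{prop:chatterjeehanson} and \ref{prop:chatterjeehansonsosoe} by means of a tangent half-space trick.

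For the upper bound \eqref{eq:pioneer1}, the starting point is a geometric observation: for every $u\in \partial B_n(x)$, the ball $B_n(x)$ lies on one side of the hyperplane through $u$ orthogonal to $u-x$, so $B_n(x)$ is contained in a tangent half-space $H_u$ whose boundary passes through $u$, with $x$ at distance $n$ from $\partial H_u$. Identifying $H_u$ with $\mathbb H$ by a rigid motion (sending $u$ to the origin and $x$ to $n\mathbf{e}_1$) and applying Proposition~\ref{prop:chatterjeehansonsosoe}(a) with $m=0$, one gets
\[
\tau_{B_n(x)}(x,u)\;\le\;\tau_{H_u}(x,u)\;\lesssim\;\frac{1}{n^{d-1}}.
\]
Since $|\partial B_n(x)|\lesssim n^{d-1}$, summation yields $\sum_{u\in \partial B_n(x)}\tau_{B_n(x)}(x,u)\lesssim 1$, which already matches the claim when $x_1\ge n$.

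To obtain the sharper bound $(1+x_1)/n$ when $x_1<n$, one must additionally exploit $B_n(x)\subset \mathbb H$. The plan is to BK-decompose the open path from $x$ to $u$ at its first crossing of the hyperplane $\{y_1 = K(1+x_1)\}$, for a large enough constant $K$:
\[
\tau_{B_n(x)}(x,u)\;\le\;\sum_{w\,:\,w_1=K(1+x_1)} \tau_{\{y_1\le K(1+x_1)\}\cap B_n(x)}(x,w)\cdot \tau_{B_n(x)}(w,u).
\]
The first factor lives in a slab with $x$ close to the lower boundary $\partial\mathbb H$, and Proposition~\ref{prop:chatterjeehansonsosoe}(a) bounds it by $\lesssim(1+x_1)/|x-w|^{d-1}$; the second factor is controlled by the tangent half-space argument of the previous paragraph, giving $\lesssim 1/|w-u|^{d-1}$. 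A careful summation over $w$ in the $(d-1)$-dimensional slab and then over $u\in\partial B_n(x)$ is designed to produce the missing factor $(1+x_1)/n$. The vertices $u\in\partial B_n(x)$ with $u_1<K(1+x_1)$ are handled separately by the direct tangent half-space bound and contribute negligibly, since only $\lesssim n^{d-2}(1+x_1)$ such vertices exist.

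For the lower bound \eqref{eq:pioneer2}, the dual strategy is used: for $\varepsilon>0$ small enough, a $\gtrsim n^{d-1}$-sized fraction of $u\in\partial B_n^\varepsilon(x)$ lies in a cone centred at $x$ along $\mathbf{e}_1$ for which Proposition~\ref{prop:chatterjeehansonsosoe}(b) gives $\tau_\mathbb H(x,u)\gtrsim (1+\min(x_1,n))/n^{d-1}$. An FKG/BK gluing argument in the spirit of \cite{ChatterjeeHansonSosoe2023subcritical} then shows that a fixed positive fraction of the connections $x\leftrightarrow u$ realised in $\mathbb H$ actually stays inside $B_n(x)$, yielding the same pointwise lower bound for $\tau_{B_n(x)}(x,u)$; summing over the admissible $u$ then gives the claim. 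The main obstacle will be the refined upper bound when $x_1<n$: the combined constraint $B_n(x)\subset H_u\cap\mathbb H$ is a wedge, not a half-space, and Propositions~\ref{prop:chatterjeehanson}--\ref{prop:chatterjeehansonsosoe} do not apply there directly, so the BK decomposition at height $K(1+x_1)$ must be executed delicately (possibly iterated, or refined by a finer partition of $\partial B_n(x)$ according to $u_1$) in order to recover the crucial $1/n$ factor without a parasitic logarithmic loss.
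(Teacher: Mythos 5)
Both halves of your argument run into the same quantitative obstruction, and in both cases the deficit is a factor of $n$, not the ``parasitic logarithm'' you anticipate. For the upper bound, your tangent half-space estimate $\tau_{B_n(x)}(x,u)\lesssim |x-u|^{-(d-1)}$ is correct, but summed over the $(d-1)$-dimensional set $\partial B_n(x)$ it can never give better than $O(1)$; the whole content of \eqref{eq:pioneer1} is the extra gain of $\min(x_1,n)/n$ over this. Your BK decomposition at height $K(1+x_1)$ does extract the factor $(1+x_1)$ from the first leg via \eqref{CHS1}, but the second leg, $\sum_{u\in\partial B_n(x)}\tau_{B_n(x)}(w,u)$, is again of order $1$ (up to logarithms) if you only feed it pointwise bounds: for $w$ at distance $\asymp n$ from $\partial B_n(x)$ there are $\asymp n^{d-1}$ terms each of size $\asymp n^{-(d-1)}$. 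Your computation therefore yields $(1+x_1)(\log n)^{O(1)}$, a full factor of $n$ above the target $(1+x_1)/n$. The missing ingredient is \cite[Lemma~26]{ChatterjeeHanson}: for a point sitting \emph{on the boundary of the region}, the pioneer sum at scale $m$ is $\lesssim 1/m$, not merely $\lesssim 1$, and this cannot be recovered from pointwise two-point function estimates. The paper takes that lemma as a black box and performs the BK decomposition near the \emph{target} $u$ rather than near the source $x$: a path from $x$ to $u\in\partial B_n(x)$ is split at its entry point $v$ into $B_{n/4}(u)$, the long leg $\tau_{\mathbb H}(x,v)\lesssim(1+\min(x_1,n))/n^{d-1}$ is controlled by \eqref{CHS1}, and the sum over $v$ of the short legs $\tau_{B_{n/4}(u)\cap B_n(x)}(v,u)$ is $\lesssim 1/n$ by Lemma~26, because $u$ lies on the boundary of that region.

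For the lower bound the gap is sharper: the pointwise estimate you want, namely $\tau_{B_n(x)}(x,u)\gtrsim\tau_{\mathbb H}(x,u)\gtrsim(1+\min(x_1,n))/n^{d-1}$ for a set of $u\in\partial B_n^\varepsilon(x)$ of cardinality $\gtrsim n^{d-1}$, is \emph{false}: summing it would give $\sum_{u}\tau_{B_n(x)}(x,u)\gtrsim 1+\min(x_1,n)$, contradicting \eqref{eq:pioneer1} by a factor of $n$. No FKG/BK gluing can rescue this, because restricting the connection to $B_n(x)$ genuinely costs a factor $\asymp 1/n$ on average over $u$ when the endpoint lies on $\partial B_n(x)$ --- the same boundary effect at $u$ that is missing from your upper bound. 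The paper circumvents pointwise lower bounds entirely: it starts from $\tau_{\mathbb H}(x,x+2n\mathbf{e}_1)\gtrsim(1+\min(x_1,n))/n^{d-1}$, which follows from \eqref{CH1} and \eqref{CHS2}, decomposes this connection at its first exit of $B_n(x)$ so that the unknown sum $\sum_{u}\tau_{B_n(x)}(x,u)$ appears multiplied by $\max_{u}\tau_{\mathbb H}(u,x+2n\mathbf{e}_1)\lesssim n^{-(d-2)}$, and shows that exit points with $u_1<\varepsilon n$ contribute only an $O(\varepsilon)$ fraction by combining \eqref{CHS1} with the already-proved bound \eqref{eq:pioneer1}; the regime $n<\tfrac12 x_1$ is handled separately via \cite{DuminilTassionNewProofSharpness2016}. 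You would need to restructure both arguments around these two ideas.
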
 
\begin{Rem} $(1)$ The case $x_1=0$ of \eqref{eq:pioneer1} was derived in \cite[Lemma~26]{ChatterjeeHanson}. It is also known (see \cite[Theorem~1.5]{vdHofstadSapoS14}) that for every $n\geq 1$ and every $x\in \mathbb H$, 
\begin{equation}
	\sum_{u\in \partial B_n(x)}\tau_{B_n(x)}(x,u)\lesssim 1.
\end{equation}
$(2)$ A weak version of \eqref{eq:pioneer1} was derived in \cite{DumPan24Perco} in the context of (sufficiently) spread-out percolation. There, the authors obtained (see \cite[Lemma~3.5]{DumPan24Perco}) the existence of $c_0=c_0(d)>0$ such that for every $n\geq 1$ and every $x\in \mathbb H$,
\begin{equation} 
	\sum_{u\in \partial B_n(x)} \tau_{B_n(x)}(x,u) \lesssim \Big(\frac {1+\min(x_1,n)} n\Big)^{c_0}. 
	\end{equation}
\end{Rem}
We postpone the proofs of these two propositions and give a short proof of our main result. 

\begin{proof}[Proof of Theorem~\textup{\ref{thm:main}}] We begin with the upper bound. By \eqref{eq: 2pt full space estimate}, one has, for every $u,v\in \mathbb H$,
\begin{equation}\label{2pointfull}
\tau_{\mathbb H}(u,v) \le \tau(u,v) \lesssim \frac 1{1+|u-v|^{d-2}}.
\end{equation}
Together with~\eqref{upperboundbyBK} and~\eqref{eq:pioneer1}, this yields the existence of $C_1>0$ such that, for every $x,y\in \mathbb H$ with $n=\lfloor|x-y|/3\rfloor\geq 1$ (resp. $n\geq L$ in the spread-out case), 
\begin{align}
\begin{split}
\tau_{\mathbb H}(x,y)&\lesssim \frac 1{n^{d-2}} \Big(\sum_{u\in \partial B_n(x)} \tau_{B_n(x)}(x,u)\Big) \cdot \Big(\sum_{v\in \partial B_n(y)} \tau_{B_n(y)}(y,v)\Big) \\&\lesssim \frac{(1+r_{x,y}) \cdot (1+r_{y,x})}{n^d}\leq C_1 \frac{(1+r_{x,y}) \cdot (1+r_{y,x})}{1+|x-y|^d},
\end{split}
\end{align}
where (recall that $r_{x,y}=\min(x_1,|x-y|)$) we used that $\min(x_1,n)\lesssim r_{x,y}$ and $\min(y_1,n)\lesssim r_{y,x}$.
In the situation where $n=0$ (resp. $n<L$ in the spread-out case), a similar bound holds trivially (to the cost of potentially increasing $C_1$), 
since  $\max(r_{x,y},r_{y,x})\leq \max(1,L)$.

The lower bound follows similarly by combining Proposition~\ref{prop:reversedBK} together with~\eqref{CH2} and~\eqref{eq:pioneer2}. Indeed, let $\varepsilon\in (0,1/2)$ be given by Proposition~\ref{prop.phik} and $n_0=n_0(\varepsilon)$ be given by Proposition \ref{prop:reversedBK}, and observe that for every $x,y\in \mathbb H$ with $n=\lfloor|x-y|/3\rfloor\geq n_0$,
\begin{align}\label{eq:proof main thm 1}
\begin{split}	
\tau_{\mathbb H}(x,y) & \stackrel{\phantom{\eqref{eq:pioneer2}}}\gtrsim \sum_{u\in \partial B_n^\varepsilon(x)} \sum_{v\in \partial B_n^\varepsilon(y)} \tau_{B_n(x)}(x,u) \cdot \tau_{\mathbb H}(u,v) \cdot \tau_{B_n(y)}(v,y)\\ 
& \stackrel{\phantom{\eqref{eq:pioneer2}}}\gtrsim  \min_{\substack{u\in \partial B_n^\varepsilon(x)\\v\in \partial B_n^\varepsilon(y)}}\tau_{\mathbb H}(u,v)\cdot  \Big(\sum_{u\in \partial B_n^\varepsilon(x)} \tau_{B_n(x)}(x,u)\Big) \cdot \Big(\sum_{v\in \partial B_n^\varepsilon(y)} \tau_{B_n(y)}(y,v)\Big) \\&\stackrel{\eqref{eq:pioneer2}}\gtrsim  \min_{\substack{u\in \partial B_n^\varepsilon(x)\\v\in \partial B_n^\varepsilon(y)}}\tau_{\mathbb H}(u,v)\cdot\frac{(1+r_{x,y}) \cdot (1+r_{y,x})}{n^2},
\end{split} 
\end{align}
where we used that $\min(x_1,n)\gtrsim r_{x,y}$ and $\min(y_1,n)\gtrsim r_{y,x}$. Observe that if $u\in \partial B_n^\varepsilon(x)$ and $v\in \partial B_n^\varepsilon(y)$, then $\tfrac{n}{3}\leq |u-v|\leq \frac{10}{\varepsilon}\min(u_1,v_1)$. Using \eqref{CH1} with $K=10/\varepsilon$ gives $c_1=c_1(\varepsilon)$ such that,
\begin{equation}\label{eq:proof main thm 2}
	\min_{\substack{u\in \partial B_n^\varepsilon(x)\\v\in \partial B_n^\varepsilon(y)}}\tau_{\mathbb H}(u,v)\geq \frac{c_1}{n^{d-2}}.
\end{equation}
Plugging \eqref{eq:proof main thm 2} in \eqref{eq:proof main thm 1} gives $c_2>0$ such that, for every $x,y\in \mathbb H$ such that $n\geq n_0$,
\begin{equation}
	\tau_{\mathbb H}(x,y)\gtrsim \frac{(1+r_{x,y}) \cdot (1+r_{y,x})}{n^d}\geq c_2 \frac { (1+ r_{x,y}) \cdot (1+ r_{y,x})}{1+ |x-y|^d}.
\end{equation}
Again, the bound in the case $n\leq n_0$ follows straightforwardly (to the cost of potentially decreasing $c_2$). This concludes the proof.
\end{proof}

\begin{Rem}\label{rem:rem}
Retrospectively, by combining \eqref{eq: 2pt full space estimate}, Theorem~\ref{thm:main}, and \eqref{eq:pioneer1}, we can deduce that there exists some constant $c>0$ such that, for every $x,y\in \mathbb H$ with $n=\lfloor |x-y|/3\rfloor\geq 1$, 
\begin{equation}\label{eq:improved reversed sl}
\tau_{\mathbb H}(x,y) \ge c \sum_{ u\in \partial B_n(x)} \sum_{v\in \partial B_n(y)} \tau_{B_n(x)}(x,u) \cdot \tau_{\mathbb H}(u,v) \cdot \tau_{B_n(y)}(v,y),
\end{equation}
which strengthens the result of Proposition~\ref{prop:reversedBK}. Let us provide a short proof of \eqref{eq:improved reversed sl}. For $u,v$ as above, \eqref{eq: 2pt full space estimate} gives that $\tau_{\mathbb H}(u,v)\lesssim \frac{1}{|x-y|^{d-2}}=\frac{1}{n^{d-2}}$, and \eqref{eq:pioneer1} gives that
\begin{equation}
	\sum_{u\in \partial B_n(x)}\tau_{B_n(x)}(x,u)\lesssim \frac{1+\min(x_1,n)}{n}, \qquad \sum_{v\in \partial B_n(y)}\tau_{B_n(y)}(y,v)\lesssim \frac{1+\min(y_1,n)}{n}.
\end{equation}
Combining these observations, we get that the sum on the right-hand side of \eqref{eq:improved reversed sl} is bounded by a quantity of order $\frac{(1+\min(x_1,n))(1+\min(y_1,n))}{n^d}$, which is itself bounded by (a multiple of) $\tau_{\mathbb H}(x,y)$ by Theorem \ref{thm:main}.
 
\end{Rem}

\section{Proof of Proposition~\ref{prop.phik} } 
We now turn to the proof of Proposition \ref{prop.phik}.
\begin{proof}[Proof of~\eqref{eq:pioneer1}] Let $n\geq 1$.
The case $x_1=0$ (resp. $x_1\leq L-1$ in the spread-out case) was derived\footnote{For full disclosure, \cite{ChatterjeeHanson} only treats the case $x_1=0$. However, it is easy to extend their result to the case $x_1\leq L-1$ by using the Fortuin--Kasteleyn--Ginibre (FKG) inequality (see \cite[Chapter~2.2]{GrimmettPercolation1999}).} in~\cite[Lemma~26]{ChatterjeeHanson}. We now consider a general point $x \in \mathbb H$. Decomposing an open self-avoiding path $\gamma$ from $x$ to $u$ according to the earliest point $v\in \partial B_{n/4}(u)$ (along $\gamma$) such that the portion of $\gamma$ between $v$ and $u$ lies in $B_{n/4}(u)$ and using \eqref{eq:BK ineq}, we obtain
for any $u\in \partial B_n(x)$,
\begin{equation}\label{eq:proof prop 2.2 1}
\mathbb P[x \connect{B_n(x)\:} u] \le \sum_{v \in \partial B_{n/4}(u) \cap B_n(x)} \mathbb P[x \connect{B_n(x)\:} v] \cdot \mathbb P[  v \connect{B_{n/4}(u) \cap B_n(x)\:} u].
\end{equation}
Write $x=(x_1,x_\bot)$, with $x_\bot\in \mathbb Z^{d-1}$. For every $v=(v_1,v_\bot)\in \partial B_{n/4}(u)\cap B_n(x)$, letting $\tilde{x}=x-(0,v_\bot)$ and $\tilde{v}=v-(0,x_\bot)$, \eqref{CHS1} gives that
\begin{equation}\label{eq:proof prop 2.2 2}  
	\mathbb P[x\connect{B_n(x)\:}v]=\mathbb P[\tilde{x}\connect{B_n(\tilde{x})\:}v_1\mathbf{e}_1]=\mathbb P[\tilde{v}\connect{B_n(x_1\mathbf{e}_1)\:}x_1\mathbf{e}_1]\lesssim \frac{\min(1+v_1,1+x_1)}{1+|x-v|^{d-1}}\lesssim \frac{1+\min(x_1,n)}{n^{d-1}},
\end{equation}
where we used translation invariance in the first two equalities. Plugging \eqref{eq:proof prop 2.2 2} in \eqref{eq:proof prop 2.2 1} and using  \eqref{eq:pioneer1} for $x_1=0$ (resp. $x_1\leq L-1$), we deduce that, for every $x\in \mathbb H$ and $u\in \partial B_n(x)$,
\begin{equation}  
\mathbb P[x \connect{B_n(x)\:} u]  \lesssim  \frac{ 1+\min(x_1,n)}{n^{d-1}} \cdot \Big(\sum_{v \in \partial B_{n/4}(u) \cap B_n(x)}  \mathbb P[  v \connect{B_{n/4}(u) \cap B_n(x)\:} u]\Big)  \lesssim \frac {1+\min(x_1,n)}{n^d}.
\end{equation}
Summing over $u \in \partial B_n(x)$ concludes the proof. 
\end{proof} 

\begin{proof}[Proof of~\eqref{eq:pioneer2}] Let $\varepsilon\in (0,1/2)$ to be fixed. Let $n \geq 1$ and $x\in \mathbb H$. Without loss of generality, we may assume that $x=(x_1,0,\ldots,0)$. We first assume that $n\geq \tfrac{1}{2}x_1$. Recall that $\mathbf{e}_1=(1,0,\ldots,0)$. On the one hand, by\footnote{Let us give more details. If $\tfrac{1}{2}x_1\leq n \leq 2x_1$, then \eqref{CH1} (for a proper choice of $K$) gives \eqref{ukun}. If $n\geq 2x_1$, we may apply \eqref{CHS2} (since $x_1+2n\geq 4x_1$) to get \eqref{ukun}.} \eqref{CH1} and \eqref{CHS2} , one has  
\begin{equation}\label{ukun}
\tau_{\mathbb H}(x,x+2n\mathbf{e}_1) \gtrsim \frac{ 1+\min(x_1,n)}{n^{d-1}}. 
\end{equation}
On the other hand, decomposing an open self-avoiding path from $x$ to $x+2n\mathbf{e}_1$ according to the first point in $\partial B_n(x)$ it visits and using \eqref{eq:BK ineq} gives
\begin{multline}\label{eq:proof pionner2 1}
\tau_{\mathbb H}(x,x+2n\mathbf{e}_1)  \lesssim \max_{u\in \partial B_n^\varepsilon(x)}\tau_{\mathbb H}(u,x+2n\mathbf{e}_1) \cdot \Big(\sum_{u\in \partial B_n^\varepsilon(x)} \tau_{B_n(x)}(x,u)\Big) \\+ \max_{u\in \partial B_n(x)\setminus \partial B_n^\varepsilon(x)}\tau_{\mathbb H}(u,x+2n\mathbf{e}_1) \cdot \Big( \sum_{u \in \partial B_n(x) \setminus \partial B_n^\varepsilon(x)}\tau_{B_n(x)}(x,u)\Big).
\end{multline} 
Using \eqref{eq: 2pt full space estimate} and \eqref{CHS1} give that, for $n$ large enough (in terms of $\varepsilon$),
\begin{equation}\label{eq:proof pionner2 2}
	\max_{u\in \partial B_n^\varepsilon(x)}\tau_{\mathbb H}(u,x+2n\mathbf{e}_1)\lesssim \frac{1}{n^{d-2}}, \qquad \max_{u\in \partial B_n(x)\setminus \partial B_n^\varepsilon(x)}\tau_{\mathbb H}(u,x+2n\mathbf{e}_1)\lesssim \frac{\varepsilon n}{n^{d-1}}.
\end{equation}
Combining \eqref{ukun}, \eqref{eq:proof pionner2 1}, and \eqref{eq:proof pionner2 2} gives, for $n$ large enough,
\begin{equation}
	\frac{ 1+\min(x_1,n)}{n^{d-1}}\lesssim \frac{1}{n^{d-2}}\cdot \Big(\sum_{u\in \partial B_n^\varepsilon(x)} \tau_{B_n(x)}(x,u)\Big)+\varepsilon\cdot \frac{ 1+\min(x_1,n)}{n^{d-1}},
\end{equation}
where we used \eqref{eq:pioneer1} to get $\sum_{u \in \partial B_n(x) \setminus \partial B_n^\varepsilon(x)}\tau_{B_n(x)}(x,u)\lesssim \frac{ 1+\min(x_1,n)}{n^{}} $. Choosing $\varepsilon\in(0,1/2)$ small enough concludes the proof in the case $n\geq \tfrac{1}{2}x_1$ and $n$ large enough (in terms of $\varepsilon$). The remaining values of $n$ can be handled by adjusting the value of the constant in \eqref{eq:pioneer2}.

It remains to treat the case $n<\tfrac{1}{2}x_1$. Observe that for this choice, one has $B_n(x)=\{u\in \mathbb Z^d : |u-x|\leq n\}$ (that is, the $\mathbb Z^d$-box of radius $n$ around $x$ is fully included in $\mathbb H$). Thus, we may use \cite{DuminilTassionNewProofSharpness2016} (which gives that $\varphi_{p_c}(B_n(x)):=\sum_{\substack{u\in \partial B_n(x)\\v\notin B_n(x)\\u\sim v}}\tau_{B_n(x)}(x,u)p_c\geq 1$) to conclude that, for any $\varepsilon\in (0,1/2)$,
\begin{equation}
	\sum_{u\in \partial B_n^\varepsilon(x)}\tau_{B_n(x)}(x,u)\geq \frac{1}{2}\sum_{u\in \partial B_n(x)}\tau_{B_n(x)}(x,u)\gtrsim 1.
\end{equation} 
This concludes the proof. 
\end{proof}

\section{Proof of Proposition~\ref{prop:reversedBK}}
The proof of Proposition \ref{prop:reversedBK} is technically more involved. We will need a number of preliminary results and notations.

We rely on the notion of \emph{regular points} initially introduced in \cite{KozmaNachmias}, and recently revisited in \cite{Asselah2025capacity}. In particular, we take advantage of the geometric and convenient definition of regular points chosen in \cite{Asselah2025capacity}. Most of the arguments below are adaptations of intermediate results that already appeared in this paper. It is worth mentioning that our setting is often simpler. For instance, our Lemma \ref{Prop:regpoints} requires an averaged estimate on the expected number of regular points, while \cite[Section~6]{Asselah2025capacity} derives finer and stronger estimates (quite similarly to what is done in \cite{KozmaNachmias}). Our main contribution is to use these regular points to derive ``reversed Simon--Lieb type inequalities''.

 In Section~\ref{sec:lowerbound2sets}, we present a general lower bound for the probability that two sets are connected, which we express in terms of their $(d-4)$-capacity. In Section~\ref{sec:regularity}, we recall all the necessary definitions to introduce the theory of regularity of~\cite{KozmaNachmias}, taking here the viewpoint of~\cite{Asselah2025capacity}. Finally, we give the proof of Proposition \ref{prop:reversedBK} in Section~\ref{sec:conclusion}. Recall that we have assumed that $d>6$ and that \eqref{eq: 2pt full space estimate} holds.

\subsection{A general lower bound for the connection probability of two finite sets}\label{sec:lowerbound2sets}
We present here a general lower bound for the probability that two finite subsets of $\mathbb H$ are connected by an open path in terms of the product of their $(d-4)$-capacity. A non-restricted version of this result has already appeared in~\cite{Asselah2025capacity}. We adapt their argument to the setting of restricted percolation on $\mathbb H$. Since the proof in~\cite{Asselah2025capacity} is only sketched, we provide here a more detailed argument for the reader's convenience. 

Recall that $\Vert \cdot \Vert$ denotes the Euclidean norm on $\mathbb R^d$. Given a non-empty finite subset $A\subset \mathbb Z^d$, we define its $(d-4)$-capacity as 
\begin{equation}
\textrm{Cap}_{d-4}(A) := \Big( \inf\Big\{ \sum_{a,b\in A} \mu(a)\mu(b) (1+\|a-b\|)^{4-d} : \mu \textrm{ probability measure on }A\Big\}\Big)^{-1}.
\end{equation} 
Given two finite sets $A,B\subset \mathbb Z^d$, we let ${\textup{d}}(A,B) := \min_{a\in A,b\in B} \|a-b\|$, and $\textrm{diam}(A) := \max_{a,a'\in A} \|a-a'\|$.  

\begin{Lem}\label{lem:2sets}
For every $c_1>0$, there exists $c>0$ such that the following holds. For every finite $A, B\subseteq \mathbb H$ such that $\textup{d}(A,B) \ge c_1\cdot \max(\textup{diam}(A),\textup{diam}(B))$, one has
\begin{equation}
\mathbb P[A\connect{\mathbb H\:} B] \geq c \cdot \frac{\big(\min_{a\in A,b\in B} \tau_{\mathbb H}(a,b)\big)^2}{d(A,B)^{2-d}} \cdot \textup{Cap}_{d-4}(A)\cdot \textup{Cap}_{d-4}(B).
\end{equation}
\end{Lem}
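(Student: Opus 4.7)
The strategy is a second-moment argument with weights given by the equilibrium measures of the $(d-4)$-Riesz capacity. Let $\mu$ on $A$ and $\nu$ on $B$ be probability measures achieving the capacity, so that $\sum_{a,a'}\mu(a)\mu(a')(1+\|a-a'\|)^{4-d}=1/\textrm{Cap}_{d-4}(A)$ (and similarly for $\nu$). Set
\[
X := \sum_{a\in A,\,b\in B}\mu(a)\nu(b)\,\mathbf 1_{\{a\connect{\mathbb H\:} b\}},
\]
so that $\mathbb P[A\connect{\mathbb H\:} B]\ge (\mathbb E[X])^2/\mathbb E[X^2]$ by Paley--Zygmund. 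The first moment $\mathbb E[X] = \sum\mu(a)\nu(b)\tau_{\mathbb H}(a,b)$ is bounded below by $\min_{a,b}\tau_{\mathbb H}(a,b)$; under the separation hypothesis all distances $\|a-b\|$ are comparable to $\textup{d}(A,B)$ and this lower bound is sharp up to constants.

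The core work is bounding the second moment. Expanding $\mathbb E[X^2]$, using $\tau_{\mathbb H}\le \tau$ and a standard tree-graph (BK-type) inequality for the four-point function, I would split the event $\{a\leftrightarrow b\}\cap\{a'\leftrightarrow b'\}$ according to whether the two clusters are distinct (giving a product $\tau(a,b)\tau(a',b')$ by BK) or coincide (so all four points are connected, bounded by a sum over trees on four leaves). The dominant tree under the separation hypothesis is the ``H-shape''
\[
\sum_{z_1,z_2}\tau(a,z_1)\tau(a',z_1)\tau(z_1,z_2)\tau(z_2,b)\tau(z_2,b'),
\]
with Steiner points $z_1$ close to $A$ and $z_2$ close to $B$.

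The key estimate that extracts the capacities is the Riesz convolution bound $\sum_z(1+\|x-z\|)^{-(d-2)}(1+\|y-z\|)^{-(d-2)}\lesssim (1+\|x-y\|)^{4-d}$, valid for $d>4$. Applied to the two ends of the H, it yields, after integrating against $\mu\otimes\mu$,
\[
\sum_{a,a'}\mu(a)\mu(a')\sum_{z_1}\tau(a,z_1)\tau(a',z_1)\lesssim \sum_{a,a'}\mu(a)\mu(a')(1+\|a-a'\|)^{4-d}=\frac{1}{\textrm{Cap}_{d-4}(A)},
\]
and similarly for the $B$-end. For the bridge factor $\tau(z_1,z_2)$, I would partition the $(z_1,z_2)$-sum by proximity to $A$ or $B$: the separation condition ensures $\tau(z_1,z_2)\lesssim \textup{d}(A,B)^{2-d}$ in the dominant regime. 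The resulting bound $\mathbb E[X^2]\lesssim (\mathbb E[X])^2 + \textup{d}(A,B)^{2-d}/(\textrm{Cap}_{d-4}(A)\textrm{Cap}_{d-4}(B))$, fed into Paley--Zygmund, produces the claim.

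The main obstacle is the tree-graph step: producing the BK-type inequality for $\mathbb P_{\mathbb H}[a\leftrightarrow b,\,a'\leftrightarrow b']$ in the restricted geometry of $\mathbb H$, and showing that among the three topologically distinct H-shaped trees on $\{a,a',b,b'\}$, only the pairing $(a,a')\,|\,(b,b')$ contributes at the target order. The other two pairings force internal tree edges to cross the gap between $A$ and $B$ more than once, and I expect the condition $d>6$ combined with the separation hypothesis to be exactly what suppresses them. Some care will also be needed in the geometric partition of $(z_1,z_2)$ to recover the sharp factor $\textup{d}(A,B)^{2-d}$ rather than a weaker bound.
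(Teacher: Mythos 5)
Your proposal follows essentially the same route as the paper: a second-moment method weighted by the capacity-achieving measures, Cauchy--Schwarz, the BK/tree-graph bound on $\mathbb P[a\connect{\mathbb H\:}b,\,a'\connect{\mathbb H\:}b']$ with exactly the two non-product tree topologies, the convolution estimate $\sum_w\tau(a,w)\tau(w,a')\lesssim(1+\|a-a'\|)^{4-d}$ to extract the energies, and a near/far splitting of the bridge sum at scale $\textup{d}(A,B)/2$. The only cosmetic difference is that in the paper the ``crossed'' pairing is not suppressed to lower order but is shown to contribute at the same admissible order $r^{2-d}\mathcal E_{d-4}(\mu)\mathcal E_{d-4}(\nu)$ (using that $\mathcal E_{d-4}(\rho)\gtrsim r^{4-d}$ under the separation hypothesis to absorb the remainder terms), which does not affect the conclusion.
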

\begin{proof} 
As for Lemma 8.1 in~\cite{Asselah2025capacity}, the proof is based on a second moment method. More precisely, given two probability measures $\mu$ and $\nu$ supported respectively on $A$ and $B$, consider the random variable
\begin{equation}
	X = \sum_{a\in A} \sum_{b\in B} \mu(a)\nu(b)\cdot \mathbf 1\{a\connect{\mathbb H\:} b\}.  
\end{equation}
Using Cauchy--Schwarz's inequality, we get 
\begin{equation}\label{CSineq}
\mathbb P[A\connect{\mathbb H\:} B] \ge \mathbb P[X>0] \ge \frac{\mathbb E[X]^2}{\mathbb E[X^2]}.
\end{equation}
The first moment $\mathbb E[X]$ can be easily lower bounded: using that $\mu$ and $\nu$ are probability measures, we obtain that 
\begin{equation}\label{firstmoment}
\mathbb E[X] \ge \min_{a\in A,b\in B} \tau_{\mathbb H}(a,b).
\end{equation}
We now upper bound $\mathbb E[X^2]$. Write
\begin{equation}
\mathbb E[X^2]  =\sum_{\substack{a,a'\in A\\ b,b'\in B}} \mu(a)\mu(a')\nu(b)\nu(b')\mathbb P[a\connect{\mathbb H\:} b, a'\connect{\mathbb H\:} b'].
\end{equation}
Exploring an open self-avoiding path from $a$ to $b$, and then one from $a'$ to $b'$, and using \eqref{eq:BK ineq}, we obtain that for every $a,a'\in A$ and $b,b'\in B$, 
\begin{multline}\label{eq.aa'bb'}
 \mathbb P[a\connect{\mathbb H\:}b, a'\connect{\mathbb H\:}b']   \le \tau_{\mathbb H}(a,b)  \tau_{\mathbb H}(a',b') 
   + \sum_{w,w'\in \mathbb H} \tau_{\mathbb H}(a,w)\tau_{\mathbb H}(a',w)  \tau_{\mathbb H}(w,w')\tau_{\mathbb H}(w',b)\tau_{\mathbb H}(w',b') 
  \\+ \sum_{w,w'\in \mathbb H}\tau_{\mathbb H}(a,w)\tau_{\mathbb H}(w,b')\tau_{\mathbb H}(w,w') \tau_{\mathbb H}(w',a')\tau_{\mathbb H}(w',b). 
 \end{multline} 
 See Figure \ref{fig:bk} for an illustration.
 \begin{figure}[htb]
 	\begin{center}
 		\includegraphics{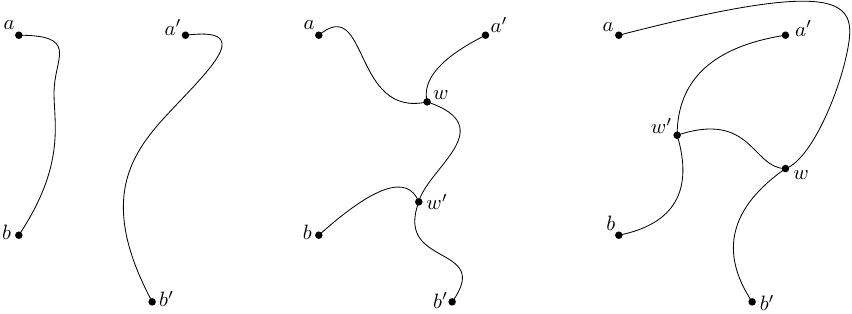}
 		\caption{An illustration of the diagrams underlying the proof of \eqref{eq.aa'bb'}. The black bold paths are open self-avoiding paths. If the event $\{a\connect{\mathbb H\:}b, a'\connect{\mathbb H\:}b'\}$ occurs, then one of the situations must occur (for some $w,w'\in \mathbb H$). Each diagram corresponds to a term on the right-hand side of \eqref{eq.aa'bb'}.}
 		\label{fig:bk}
 	\end{center}
 \end{figure}
 
 We begin with the analysis of the first sum on the right-hand side of \eqref{eq.aa'bb'}. Letting $r= \textup{d}(A,B)/2$ and using \eqref{eq: 2pt full space estimate}, we find that 
 \begin{align}\label{eq.aa'bb' bis}
 	\begin{split}
\Sigma(a,a',b,b')& := \sum_{w,w'\in \mathbb H} \tau_{\mathbb H}(a,w)\tau_{\mathbb H}(a',w)  \tau_{\mathbb H}(w,w')\tau_{\mathbb H}(w',b)\tau_{\mathbb H}(w',b')  \\
 &\lesssim \frac 1{r^{d-2}} \Big(\sum_{w\in \mathbb H} \tau_{\mathbb H}(a,w)\tau_{\mathbb H}(a',w)\Big)\cdot \Big(\sum_{w'\in \mathbb H} \tau_{\mathbb H}(w',b)\tau_{\mathbb H}(w',b')\Big)  \\
 & \qquad+  \sum_{\substack{w,w'\in \mathbb H\\\| w-w'\|\le r}}  \tau_{\mathbb H}(a,w)\tau_{\mathbb H}(a',w)  \tau_{\mathbb H}(w,w')\tau_{\mathbb H}(w',b)\tau_{\mathbb H}(w',b'). 
 \end{split}
 \end{align} 
Using \eqref{eq: 2pt full space estimate} and a classical estimate (see for instance \cite[Proposition~B.1]{DumPan24Perco}), 
\begin{equation}\label{eq:bounda}
\sum_{w\in \mathbb H} \tau_{\mathbb H}(a,w)\tau_{\mathbb H}(a',w) \le \sum_{w\in \mathbb Z^d} \tau(a,w) \tau(w,a') \lesssim (1+ \|a-a'\|)^{4-d}, 
\end{equation}
and likewise, 
\begin{equation}\label{eq:boundb} 
\sum_{w'\in \mathbb H} \tau_{\mathbb H}(w',b)\tau_{\mathbb H}(w',b') \lesssim (1+ \|b-b'\|)^{4-d}.
\end{equation} 
We now look at the second sum on the second line of \eqref{eq.aa'bb' bis}. If $w$ is at distance at least $r/2$ from $A$, then,
\begin{multline}
	\sum_{\substack{w'\\w: \textup{d}(w,A)\geq r/2, \: \Vert w-w'\Vert \leq r}}\tau_{\mathbb H}(a,w)\tau_{\mathbb H}(a',w)  \tau_{\mathbb H}(w,w')\tau_{\mathbb H}(w',b)\tau_{\mathbb H}(w',b')\\\lesssim \frac{1}{r^{2d-4}}\sum_{w'}\tau_{\mathbb H}(w',b)\tau_{\mathbb H}(w',b')\sum_{w: \Vert w-w'\Vert \leq r}\tau(w,w')\lesssim \frac{1}{r^{2d-6}}(1+\Vert b-b'\Vert)^{4-d},
\end{multline}
where in the first inequality we used \eqref{eq: 2pt full space estimate}, and in the second one we used \eqref{eq:boundb} and \eqref{eq: 2pt full space estimate} one more time to get $\sum_{w: \Vert w-w'\Vert \leq r}\tau(w,w')\lesssim r^2$. Similarly,
\begin{equation}
	\sum_{\substack{w\\w': \textup{d}(w',B)\geq r/2, \: \Vert w-w'\Vert \leq r}}\tau_{\mathbb H}(a,w)\tau_{\mathbb H}(a',w)  \tau_{\mathbb H}(w,w')\tau_{\mathbb H}(w',b)\tau_{\mathbb H}(w',b')\lesssim \frac{1}{r^{2d-6}}(1+\Vert a-a'\Vert)^{4-d}.
\end{equation}
By definition of $r$, it is impossible to have $\textup{d}(w,A)<r/2$, $\textup{d}(w',B)<r/2$, and $\Vert w-w'\Vert$ simultaneously. Hence,
\begin{multline}
	\sum_{\substack{w,w'\in \mathbb H\\\| w-w'\|\le r}}  \tau_{\mathbb H}(a,w)\tau_{\mathbb H}(a',w)  \tau_{\mathbb H}(w,w')\tau_{\mathbb H}(w',b)\tau_{\mathbb H}(w',b')\\\lesssim \frac 1{r^{2d-6}} \Big((1+ \|a-a'\|)^{4-d} + (1+ \|b-b'\|)^{4-d}\Big).
\end{multline}
Therefore, if we define for a probability measure $\rho$, its energy as 
\begin{equation}
\mathcal E_{d-4}(\rho) :=  \sum_{u,v\in \mathbb Z^d} \rho(u)\rho(v) (1+\|u-v\|)^{4-d},
\end{equation} 
one obtains from the previously displayed equations that
\begin{align}\label{sumSigma} 
\begin{split}
\sum_{\substack{a,a'\in A\\b,b'\in B}} \mu(a)\mu(a') \nu(b)\nu(b') \Sigma(a,a',b,b') & \lesssim \frac 1{r^{d-2}} \mathcal E_{d-4}(\mu) \mathcal E_{d-4}(\nu) + \frac 1{r^{2d-6}} \Big(\mathcal E_{d-4}(\mu) + \mathcal E_{d-4}(\nu)\Big)\\
&\lesssim  \frac 1{r^{d-2}} \cdot \mathcal E_{d-4}(\mu) \mathcal E_{d-4}(\nu), 
\end{split}
\end{align} 
where in the second inequality, we used that, since (by hypothesis) $r\gtrsim  \max(\textrm{diam}(A), \textrm{diam}(B))$, one has $\mathcal E_{d-4}(\rho) \gtrsim \frac{1}{r^{d-4}}$, for any probability measure $\rho$ supported on $A$ or $B$.  

Similar computations allow to treat  the last sum in the right-hand side of~\eqref{eq.aa'bb'}. More precisely, using repeatedly \eqref{eq: 2pt full space estimate}, we can see that it is upper bounded up to some multiplicative constant by 
\begin{align}\label{eq:second sum second moment}
\begin{split}
& \frac{1}{r^{d-2}} \Big(\sum_{w\in \mathbb H}  \tau_{\mathbb H}(a,w)\tau_{\mathbb H}(w,b')\Big) \cdot \Big( \sum_{w'\in \mathbb H} \tau_{\mathbb H}(w',a')\tau_{\mathbb H}(w',b)\Big) \\
& \qquad\qquad+ \sum_{\substack{w,w'\in \mathbb H\\\|w-w'\|\le r}} \tau_{\mathbb H}(a,w)\tau_{\mathbb H}(w,b')\tau_{\mathbb H}(w,w') \tau_{\mathbb H}(w',a')\tau_{\mathbb H}(w',b) \\
&\lesssim \frac 1{r^{d-2}}\Big\{\frac 1{r^{2(d-4)}} +  \frac 1{r^{d-2}} \sum_{\substack{w,w'\in \mathbb H\\\|w-w'\|\le r}} \Big( \tau_{\mathbb H}(a,w)\tau_{\mathbb H}(w,w') \tau_{\mathbb H}(w',a') + \tau_{\mathbb H}(w,b')\tau_{\mathbb H}(w,w') \tau_{\mathbb H}(w',b)\Big)\Big\} \\
& \lesssim \frac 1{r^{d-2}}\Big\{\mathcal E_{d-4}(\mu)\mathcal E_{d-4}(\nu) + \frac 1{r^{d-2}} \Big(\frac 1{1+\|a-a'\|^{d-6}} + \frac 1{1+\|b-b'\|^{d-6}}\Big) \Big\} \\
& \lesssim  \frac 1{r^{d-2}}\Big\{\mathcal E_{d-4}(\mu)\mathcal E_{d-4}(\nu) + \frac {\mathcal E_{d-4}(\nu)}{1+\|a-a'\|^{d-4}} + \frac {\mathcal E_{d-4}(\mu)}{1+\|b-b'\|^{d-4}} \Big\}, 
\end{split}
\end{align} 
where in the second inequality, we used \eqref{eq: 2pt full space estimate} to argue that
\begin{equation}
	\sum_{\substack{w,w'\in \mathbb H\\\|w-w'\|\le r}}\tau_{\mathbb H}(a,w)\tau_{\mathbb H}(w,w') \tau_{\mathbb H}(w',a')\leq \sum_{w,w'\in \mathbb Z^d}\tau(a,w)\tau(w,w') \tau(w',a') \lesssim \frac{1}{1+\Vert a-a'\Vert^{d-6}}.
\end{equation}
Summing \eqref{eq:second sum second moment} over $a,a'\in A$, and $b,b'\in B$ against $\mu(a)\mu(a')\nu(b)\nu(b')$, we obtain the same upper bound as in~\eqref{sumSigma}, and conclude that
\begin{equation}
\mathbb E[X^2] \lesssim \frac 1{r^{d-2}} \cdot \mathcal E_{d-4}(\mu) \mathcal E_{d-4}(\nu).
\end{equation}
Optimising over the choices of $\mu$ and $\nu$ and combining the result with~\eqref{CSineq} and~\eqref{firstmoment} concludes the proof. 
\end{proof}

 \subsection{Regular points, line good points, extended cluster} \label{sec:regularpoints}\label{sec:regularity}
We present here the basis of a technique first introduced in~\cite{KozmaNachmias} to derive the one-arm exponent in high-dimensional critical percolation. It is based on a notion of regularity. 
Here, we will adapt to our setting the definition of regular points from~\cite{Asselah2025capacity} which is stated in purely geometric terms. 

We write $\mathcal C(x;A) := \{z \in A : x \stackrel{A} \longleftrightarrow z\}$ for the cluster of a point $x$ restricted to a set $A$. 
Fix $n\ge 1$ and $x\in \mathbb H$, and for $z\in \partial B_n(x)$, and $s>0$, consider the event   
			\begin{multline}
				\mathcal T_s(z) :=  \{ |\mathcal C(z;B_n(x))\cap B_s(z)| \leq s^4 (\log s)^7\} \\\cap \{|\mathcal C(z;B_n(x))\cap B_s(z)\cap \partial B_n(x)|\leq s^2 (\log s)^7  \}. 
			\end{multline}

\begin{Def}[$K$-regular points] Given $K>0$, we call $z\in \partial B_n(x)$ a $K$-regular point, if the events $\mathcal T_s(z)$ hold for all $s\ge K$. 
\end{Def}
Let $\varepsilon \in [0,1/2)$, We denote by $X_n^{\varepsilon,K-\textup{reg}}(x)$ the number of points on $\partial B_n^\varepsilon(x)$ (recall~\eqref{Brepsilon}), which are $K$-regular and connected to $x$ in $B_n(x)$. Also, denote by $X_n^{\varepsilon}(x)$ the number of points on $\partial B_n^\varepsilon(x)$ which are connected to $x$ in $B_n(x)$ (the so-called pioneers). It turns out that most of the pioneers are regular, and consequently one can show the following lemma. 
\begin{Lem}\label{Prop:regpoints}  There exist $K_0\ge 1$ and $n_0\ge 1$ such that the following holds. For every $K\geq K_0$, every $n\ge n_0$, every $\varepsilon\in [0,1/2)$, and every $x\in \mathbb H$,
\begin{equation} 
\mathbb E[X_{n}^{\varepsilon,K-\textup{reg}}(x)] \ge \frac 12 \cdot \mathbb E[X_n^\varepsilon(x)].
\end{equation}
\end{Lem}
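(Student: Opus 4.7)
The plan is to adapt the Kozma--Nachmias regularity/second-moment scheme to the half-ball $B_n(x)$. By linearity of expectation, I will reduce the claim to the pointwise bound
\begin{equation*}
\mathbb P\bigl[\{x\connect{B_n(x)\:}z\}\cap \{z\ \textrm{is not }K\textrm{-regular}\}\bigr]\le \tfrac12\,\tau_{B_n(x)}(x,z)\qquad \forall z\in \partial B_n^\varepsilon(x).
\end{equation*}
Setting $A=\{x\connect{B_n(x)\:}z\}$, non-regularity forces $\mathcal T_s(z)$ to fail for some $s\ge K$. I will restrict $s$ to dyadic values $s=2^j\in[K,Cn]$ (scales beyond $Cn$ being handled trivially since $|\mathcal C(z;B_n(x))|$ is bounded by $|B_n(x)|$) and apply Markov's inequality to each of the two monotone quantities defining $\mathcal T_s(z)$. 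This bounds $\mathbb P[A\cap \mathcal T_s(z)^c]$ by $\Phi_s/(s^4(\log s)^7)+\Psi_s/(s^2(\log s)^7)$, where
\begin{equation*}
\Phi_s:=\mathbb E\bigl[\mathbf 1_A\,|\mathcal C(z;B_n(x))\cap B(z,s)|\bigr],\qquad \Psi_s:=\mathbb E\bigl[\mathbf 1_A\,|\mathcal C(z;B_n(x))\cap B(z,s)\cap \partial B_n(x)|\bigr].
\end{equation*}

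The heart of the argument will be the second-moment estimates
\begin{equation*}
\Phi_s\lesssim s^4\,\tau_{B_n(x)}(x,z),\qquad \Psi_s\lesssim s^2\,\tau_{B_n(x)}(x,z),
\end{equation*}
which I would prove by expanding each of $\Phi_s,\Psi_s$ as a sum of $\mathbb P[x\connect{B_n(x)\:}z,\, y\connect{B_n(x)\:}z]$ over $y$ and bounding the joint probability via the BK tree-graph inequality, $\mathbb P[\cdot]\le \sum_w \tau_{B_n(x)}(x,w)\tau_{B_n(x)}(w,y)\tau_{B_n(x)}(w,z)$. Once these are in hand, the Markov step gives $\tau_{B_n(x)}(x,z)/(\log s)^7$ at each dyadic scale, and the dyadic sum $\sum_{j\ge \log_2 K}j^{-7}\lesssim (\log K)^{-6}$ can be made at most $1/2$ by choosing $K\ge K_0$ large.

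The hard part will be the second-moment verification. For $\Phi_s$: since $z_1\ge \varepsilon n$ keeps the dominant values of $w$ (with $|w-z|\le s$) uniformly away from $\{x_1=0\}$, one has $\tau_{B_n(x)}(x,w)\asymp \tau_{B_n(x)}(x,z)$ and this factor can be pulled out; the remaining double sum is then handled by the standard high-dimensional bubble $\sum_w \tau(w,z)\tau(w,y)\lesssim (1+|y-z|)^{4-d}$ together with $\sum_{y\in B(z,s)}(1+|y-z|)^{4-d}\lesssim s^4$. For $\Psi_s$: a na\"ive repetition of the same chain yields only $s^3\tau_{B_n(x)}(x,z)$, whose resulting dyadic sum $\sum s/(\log s)^7$ diverges. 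To recover the needed $s^2$ one must exploit that $y\in \partial B_n(x)$ sits on the locally flat outer boundary of $B_n(x)$, so $\tau_{B_n(x)}(w,y)\lesssim (1+|w-y|)^{1-d}$ for interior $w$ --- the ball-boundary analogue of \eqref{CH2}, derivable from Proposition~\ref{prop:chatterjeehanson} by local flattening of the ball boundary. This improves the convolution to $\sum_w \tau(w,z)\tau_{B_n(x)}(w,y)\lesssim (1+|y-z|)^{3-d}$, and then $\sum_{y\in \partial B_n(x)\cap B(z,s)}(1+|y-z|)^{3-d}\lesssim s^2$ matches the threshold $s^2(\log s)^7$ exactly. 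Establishing this ball-boundary decay for points inside a half-space ball is the genuine new geometric input of the proof beyond the full-space machinery.
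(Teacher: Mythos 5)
Your overall strategy is genuinely different from the paper's, and it has a gap that I do not think can be closed with the tools available at this stage. The paper never attempts a pointwise bound of the form $\mathbb P[\{x\connect{B_n(x)\:}z\}\cap\{z\text{ not }K\text{-regular}\}]\le\tfrac12\tau_{B_n(x)}(x,z)$; it proves only the averaged statement, by introducing the purely local events $\mathcal T_s^{\textup{loc}}(z)\subseteq\mathcal T_s(z)$ (which hold with probability $\ge 1-\exp(-c(\log s)^4)$ uniformly in $z$, Lemma~\ref{lem:Tsloc}), tiling a neighbourhood of $\partial B_n^\varepsilon(x)$ by boxes of side $\asymp s^d$ so as to decouple the local bad event from the connection to $x$, and reconnecting via the Kozma--Nachmias lower bound $\tau_{B_s(u)}(u,v)\ge c\exp(-C(\log s)^2)$ (Lemma~\ref{lem:KN}); the whole point is that both error and main term are compared to $\mathbb E[X_n^\varepsilon(x)]$ \emph{in aggregate}, because no polynomial pointwise lower bound on $\tau_{B_n(x)}(x,z)$ is known --- only the super-polynomially small $\exp(-C(\log n)^2)$ of Lemma~\ref{lem:KN}. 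Your reduction to a pointwise inequality therefore already asks for more than is available.

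The concrete failure is in the second-moment step. You claim that for $|w-z|\le s$ one has $\tau_{B_n(x)}(x,w)\asymp\tau_{B_n(x)}(x,z)$ because $z_1\ge\varepsilon n$; but the boundary that matters here is $\partial B_n(x)$, not $\partial\mathbb H$. The point $z$ lies \emph{on} $\partial B_n(x)$, where (by the pointwise bound inside the proof of~\eqref{eq:pioneer1}) $\tau_{B_n(x)}(x,z)\lesssim(1+\min(x_1,n))/n^{d}$, whereas a branch point $w$ at distance $\delta$ from $\partial B_n(x)$ has $\tau_{B_n(x)}(x,w)$ larger by a factor of order $\delta$ (e.g.\ for $\delta\asymp n$ it is $\gtrsim n^{2-d}$ by~\eqref{CH1}). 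Carrying this $\delta$ through the tree-graph sum with only the known \emph{upper} bounds ($\tau_{B_n(x)}\le\tau\lesssim|\cdot|^{2-d}$) produces $\Phi_s\lesssim s^{5}\tau_{B_n(x)}(x,z)$ rather than $s^4$, and the resulting dyadic sum $\sum_j 2^j/j^{7}$ diverges. Recovering the correct powers $s^4$ and $s^2$ requires sharp two-sided boundary estimates for $\tau_{B_n(x)}(\cdot,\cdot)$ with one or both arguments near $\partial B_n(x)$ --- i.e.\ a box analogue of Theorem~\ref{thm:main} itself --- which is circular. You flag this only for $\Psi_s$ and propose to get it by ``local flattening'' from~\eqref{CH2}; but domain monotonicity into a tangent half-space gives the $(1+|w-y|)^{1-d}$ decay only when $|w-y|\le K\cdot d(w,\partial B_n(x))$, and the dominant contribution comes precisely from $w$ close to $\partial B_n(x)$ where this fails (not to mention edges and corners of the box). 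Finally, a minor point: the scales $n\lesssim s\lesssim n^{(d-1)/2}$ are not ``handled trivially'' by the volume bound $|\mathcal C(z;B_n(x))|\le|B_n(x)|$. I recommend abandoning the pointwise Markov route and following the local-event/decoupling/reconnection scheme of Lemmas~\ref{lem:Tsloc.Ts}--\ref{lem:KN}.
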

We defer the proof of this lemma to Section~\ref{sec:proofPropreg} and introduce now the notion of $K$-line good points.
For this, one first needs to consider a maximal subset of the set of $K$-regular points of $\partial B_n^{\varepsilon}(x)$ which has the property that all its points are  at distance at least $2K$ one from each other. Denote by $\mathcal X_n^{\varepsilon,K-\textrm{reg}}(x)$ one such maximal subset chosen uniformly at random. Then, if $z\in\mathcal X_n^{\varepsilon,K-\textrm{reg}}(x)$, we consider a line segment of length $K$ emanating from $z$, outside $B_n(x)$ and orthogonal to its boundary (choose one arbitrarily if there are many). Call $z'$ the endpoint of this line segment. We say that $z'$ is a \textbf{$K$-line good point} if all the edges on the line segment between $z$ and $z'$ are open. More generally, for any $z\in \mathcal X_n^{\varepsilon,K-\textrm{reg}}(x)$, we denote by $L_z$ the maximal open segment emanating from $z$ orthogonally to $B_n(x)$, of length at most $K$.

We next define the \textbf{extended cluster} of $x$ in $B_n(x)$, which we denote by $\mathcal C_n^e(x)$, as the cluster of $x$ in $B_n(x)$ together with all the line segments $L_z$ for $z\in \mathcal X_n^{\varepsilon,K-\textrm{reg}}(x)$.

We say that a set $A$ is $K$-\textbf{admissible} for the pair $(x,n)$, if $\mathbb P[\mathcal C_n^e(x) = A ]>0$, and for such admissible set   
we denote by $\partial_*A$ its set of points which are at distance exactly $K$ from $B_n(x)$. Hence 
by definition $\partial_*\mathcal C_n^e(x)$ is the set of $K$-line good points.

One interest of the notion of regularity, which has been noticed and used extensively in~\cite{Asselah2025capacity}, is that in any dimension $d>6$, admissible sets have a $(d-4)$-capacity which is comparable to their cardinality. 
Indeed, the following lemma was observed in~\cite[Claim~6.1]{Asselah2025capacity}. 
\begin{Lem}\label{lem:lowercap}
There exists a constant $c>0$, such that for every $n,K\geq 1$, every $\varepsilon\in [0,1/2)$, every $x\in \mathbb H$, and every $K$-admissible set $A$ for the pair $(x,n)$, 
one has 
\begin{equation}
\textup{Cap}_{d-4}(\partial_*A) \ge c |\partial_*A|. 
\end{equation} 
\end{Lem}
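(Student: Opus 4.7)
The plan is to test the variational definition of $\textup{Cap}_{d-4}(\partial_*A)$ against the uniform probability measure $\mu$ on $\partial_*A$. Writing $N = |\partial_*A|$, it will suffice to prove the uniform estimate
\begin{equation}\label{eq:targetcap}
\max_{a\in\partial_*A} \sum_{b\in \partial_*A} (1+\|a-b\|)^{4-d} \lesssim 1,
\end{equation}
since this will yield $\mathcal E_{d-4}(\mu) = N^{-2}\sum_{a,b}(1+\|a-b\|)^{4-d} \le C/N$, hence $\textup{Cap}_{d-4}(\partial_*A)\ge N/C$.

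The geometric input comes from the structure of $\partial_*A$. Every $a\in \partial_*A$ admits a unique representation $a = z + K\mathbf{e}_{k(z)}$, where $z$ ranges over the maximal set $\mathcal X := \mathcal X_n^{\varepsilon,K\textup{-reg}}(x)$ and $\mathbf e_{k(z)}$ is the outward unit normal to the face of $\partial B_n(x)$ containing $z$. I will exploit two properties of $\mathcal X$: the \emph{separation} $|z-z'|\ge 2K$ between distinct $z,z'\in \mathcal X$, which is immediate from maximality; and the fact that all points of $\mathcal X$ lie in the common cluster $\mathcal C(x;B_n(x))$ and are $K$-regular, so that by the second coordinate of $\mathcal T_s(z)$, for every $z\in \mathcal X$ and every $s\ge K$,
\begin{equation}\label{eq:regX}
|\mathcal X\cap B(z,s)| \le |\mathcal C(z;B_n(x))\cap \partial B_n(x)\cap B(z,s)| \le s^2(\log s)^7.
\end{equation}
Since $\bigl|\|a-a'\| - \|z-z'\|\bigr|$ is bounded by $2K$ via the triangle inequality, these density bounds for $\mathcal X$ transfer to $\partial_*A$ up to a shift of scale by $2K$.

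To establish \eqref{eq:targetcap}, I would fix $a_0 = z_0 + K\mathbf e_{k(z_0)} \in \partial_*A$ and split the sum into two ranges. In the short-range regime $\|a-a_0\|\le K$, the corresponding $z$'s all lie in $B(z_0, 3K)$, so by the separation property and a standard volume packing bound there are at most $C_d$ such points, contributing $O(1)$ in total. In the long-range regime $\|a-a_0\|>K$, I would decompose dyadically over annuli $2^j K < \|a-a_0\|\le 2^{j+1}K$ for $j\ge 0$. Each such $a$ has $|z-z_0|\le 2^{j+2}K$, and applying \eqref{eq:regX} at scale $s=2^{j+2}K$ gives at most $C(2^j K)^2(\log(2^j K))^7$ points in the annulus, each contributing at most $(2^j K)^{4-d}$. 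Summing over $j\ge 0$ leaves a geometric series with ratio $2^{6-d}<1$, which converges, and the total long-range contribution is $\lesssim K^{6-d}(\log K)^7 \lesssim 1$, uniformly in $K\ge 1$. Combining both regimes yields \eqref{eq:targetcap}.

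The main obstacle is that the regularity bound \eqref{eq:regX} is only effective at scales $s\ge K$, so the short-range regime must be handled by pure packing rather than by regularity. The interplay between the two regimes and the need for a $K$-independent constant is precisely what forces $d>6$: it is this dimension assumption that simultaneously absorbs the quadratic density growth in \eqref{eq:regX} and the logarithmic corrections, via convergence of the geometric factor $2^{j(6-d)}$.
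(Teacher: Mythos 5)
Your proposal is correct and follows essentially the same route as the paper: test the capacity against the uniform measure on $\partial_*A$, bound $\max_a\sum_{a'}(1+\|a-a'\|)^{4-d}$ via the second defining condition of $K$-regularity through a dyadic sum $\sum_{i\ge \log_2 K}2^{2i}i^7 2^{(4-d)i}\lesssim 1$, which converges precisely because $d>6$. The only difference is that you make explicit the sub-$K$ scale via the $2K$-separation and packing, a detail the paper's one-line estimate absorbs implicitly.
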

\begin{proof}
For the reader's convenience, we include a short proof. By taking $\mu$ to be the uniform measure on $\partial_*A$ in the definition of the $(d-4)$-capacity, one gets 
\begin{equation}
\textrm{Cap}_{d-4}(\partial_*A) \ge \frac{ |\partial_*A|^2}{\sum_{a,a'\in \partial_*A} (1+\|a-a'\|)^{4-d}}. 
\end{equation}

Let $\omega$ be a percolation configuration realizing $\{\mathcal C_n^e(x)=A\}$ (note that it exists since $A$ is $K$-admissible for the pair $(x,n)$). By definition, to every fixed $a\in \partial_*A$ corresponds a unique $z_a\in \partial B_n(x)$ such that $\omega$ realises the events $\{z_a\connect{B_n(x)\:}x\}$ and $\mathcal T_{s}(z_a)$ for every $s\geq K$. Thus, one has that, for every $a\in \partial_*A$, and every $s\geq K$,
\begin{equation}\label{eq:bound partial A cap Bs}
	|\partial_*A\cap B_s(a)|\lesssim |\partial_*A\cap B_s(z_a)|\lesssim |\mathcal C(z_a;B_n(x))\cap B_s(z_a)\cap \partial B_n(x)|\leq s^2(\log s)^7,
\end{equation}
where the implicit constants do not depend on $x$, $n$, $K$, $\varepsilon$, and $A$. As a consequence, for every fixed $a\in \partial_*A$, we find that
\begin{equation} 
\sum_{a'\in \partial_*A} (1+\|a-a'\|)^{4-d} \lesssim 1+ \sum_{i\geq \log_2(K)}\frac{|\partial_*A\cap (B_{2^{i+1}}(a)\setminus B_{2^i}(a))|}{2^{i(d-4)}}\lesssim 1+\sum_{i \geq \log_2(K)}\frac{2^{2i}(\log 2^i)^7}{2^{i(d-4)}} \lesssim  1,
\end{equation}
where in the first inequality we used that (by definition) $(\partial_*A\setminus \{a\}) \cap B_{K}(a)=\emptyset$, in the second inequality we used \eqref{eq:bound partial A cap Bs}, and where (again) the implicit constants do not depend on $x$, $n$, $K$, $\varepsilon$, and $A$. This concludes the proof. 
\end{proof}   

\subsection{Conclusion}\label{sec:conclusion}
We now have all the necessary material to prove our desired result. 

\begin{proof}[Proof of Proposition~\textup{\ref{prop:reversedBK}}] 
Let $\varepsilon\in (0,1/2)$. Let $x,y\in \mathbb H$ and set $n = \lfloor |x- y|/3\rfloor$. Fix $K_0,n_0$ as in Lemma~\ref{Prop:regpoints}. Let $K\geq K_0$ to be chosen large enough and assume that $n\ge n_0$.  

We first observe that 
\begin{equation}
	\tau_{\mathbb H}(x,y) \ge \sum_{A,B} \mathbb P\big[\mathcal C_n^e(x) = A, \mathcal C_n^e(y) = B, \partial_*A\connect{\textrm{off } (A\cup B)\:} \partial_*B\big],
\end{equation}
where $\{\partial_*A\connect{\textrm{off } (A\cup B)\:}\partial_*B\}$ is the event that $\partial_*A$ is connected to $\partial_*B$ by an open path in $\mathbb H$ that avoids $A\cup B$, except at its end points. Note next that the two events  
$\{\mathcal C_n^e(x) = A,\mathcal C_n^e(y) = B\}$ and $\{\partial_*A\connect{\textrm{off } (A\cup B)\:} \partial_*B\}$ depend on different sets of edges, and are thus independent. Hence, we get 
\begin{equation}  
\tau_{\mathbb H}(x,y) \ge \sum_{A,B} \mathbb P\big[\mathcal C_n^e(x) = A, \mathcal C_n^e(y) = B\big] \cdot \mathbb P\big[\partial_*A\connect{\textrm{off } (A\cup B)\:} \partial_*B\big]. 
\end{equation} 
Given $A,B$ which are $K$-admissible respectively for $(x,n)$ and $(y,n)$, we define 
\begin{equation} 
C= A\cup B \cup \big(\bigcup_{a\in \partial_*A} B_K(a)\big) \cup \big(\bigcup_{b\in \partial_*B} B_K(b)\big).
\end{equation}
Now, we fix an arbitrary ordering of the elements of $\partial_*A$ and $\partial_*B$. On the event $\{\partial_*A\connect{\textrm{off } (A\cup B)\:}\partial_*B\}$, we denote by $Y_1$ the first element $a\in \partial_*A$ for this ordering such that $\partial B_K(a)$ is connected to $\cup_{b\in \partial_*B} B_K(b)$ by an open path that avoids $C$, and let $Y_2$ be the first element $b\in \partial_*B$ such that $\partial B_K(Y_1)$ is connected to $\partial B_K(b)$ by an open path that avoids $C$. Finally, we let
\begin{equation} 
H = (A \cup B) \setminus \big(\bigcup_{z \in \partial_*A \cup \partial_*B} L_z\big),
\end{equation}
where for any $a\in \partial_*A$ we let $L_a$ be the line segment of length $K$ between $a$ and $B_n(x)$, and similarly for $b\in \partial_*B$. Letting $c(K)>0$ be the probability that all edges are open in a box of size $K$, we find  
\begin{align}
\begin{split}
 \mathbb P[\partial_*A\connect{\textrm{off } (A\cup B)\:}\partial_*B\big]& \ge \sum_{a\in \partial_*A}\sum_{b\in \partial_*B} \mathbb P\big[Y_1 = a, Y_2= b, \textrm{all edges in }B_K(a)\cup B_K(b) \textrm{ are open}\big] \\
 &  =  c(K)^2 \cdot \mathbb P \Big[ \big(\cup_{a\in \partial_*A} B_K(a)\big) \connect{\textrm{off } C\:}\big(\cup_{b\in \partial_*B} B_K(b)\big)\Big]\\
 & = c(K)^2 \cdot \mathbb P \Big[ \big(\cup_{a\in \partial_*A} B_K(a)\big) \connect{\textrm{off } H\:}\big(\cup_{b\in \partial_*B} B_K(b)\big)\Big]\\
 & \ge c(K)^2 \cdot \mathbb P \big[  \partial_*A  \connect{\textrm{off } H\:}\partial_*B\big].
 \end{split} 
 \end{align}
We then write 
\begin{equation}
\mathbb P\big[  \partial_*A  \connect{\textrm{off } H\:}\partial_*B\big] = 
\mathbb P\big[  \partial_*A  \connect{} \partial_*B\big] - \mathbb P\big[  \partial_*A  \connect{\textrm{via } H\:}\partial_*B\big], 
\end{equation}
where $\{\partial_*A  \connect{\textrm{via } H\:}\partial_*B\}$ denotes the event that $\partial_*A$ and  $\partial_*B$ are connected by an open path, and all open paths that connect them intersect $H$.  We claim that for any constant $\delta>0$, one can find $K\geq K_0$ large enough, so that for all admissible sets $A$ and $B$, 
\begin{equation} \label{connect.viaH}
\mathbb P\big[  \partial_*A  \connect{\textrm{via } H\:}\partial_*B\big] \le \frac{\delta}{n^{d-2}} \cdot |\partial_*A| \cdot |\partial_*B|. 
\end{equation} 
To see this, we note that by a union bound, it suffices to show that for $K$ sufficiently large, for every $a\in \partial_*A$ and $b\in \partial_*B$, one has\begin{equation} \label{connect.viaH2}
\mathbb P\big[ a \connect{\textrm{via } H\:} b\big] \le \frac{\delta}{n^{d-2}}. 
\end{equation} 
Now, decomposing an open self-avoiding path connecting $a$ and $b$ through $H$ according to the first point in $H$ it visits, and using \eqref{eq:BK ineq} and~\eqref{eq: 2pt full space estimate}, we deduce that  
\begin{align}  \label{eq: bound with sqrt K}
\begin{split}
\mathbb P\big[ a \connect{\textrm{via } H\:}b\big]& \le \sum_{u\in H} \tau_{\mathbb H}(a,u) \tau_{\mathbb H}(u,b) \lesssim \frac 1{n^{d-2}} \Big( \sum_{u\in H\cap A} \tau_{\mathbb H}(a,u) +  \sum_{u\in H\cap B} \tau_{\mathbb H}(u,b)\Big)\\
& \lesssim \frac 1{n^{d-2}}   \sum_{i \geq \log_2(K)}\frac{2^{4i} (\log 2^i)^7}{2^{i(d-2)}} \lesssim \frac 1{n^{d-2}  \sqrt K}, 
\end{split} 
\end{align}
where we used that
\begin{equation}
	\sum_{u\in H\cap A}\tau_{\mathbb H}(a,u)\lesssim \sum_{i\geq \log_2(K)}\frac{|H\cap A\cap (B_{2^{i+1}}(a)\setminus B_{2^i-1}(a))|}{2^{i(d-2)}}\lesssim \sum_{i \geq \log_2(K)}\frac{2^{4i} (\log 2^i)^7}{2^{i(d-2)}},
\end{equation}
which follows by using that $H\cap B_{K-1}(a)=\emptyset$, and by proving---similarly to \eqref{eq:bound partial A cap Bs}---that $|H\cap A\cap B_s(a)|\lesssim s^4(\log s)^7$ for every $s\geq K$. 
As a consequence, \eqref{eq: bound with sqrt K} gives \eqref{connect.viaH2}---and hence~\eqref{connect.viaH}---by choosing $K$ large enough. 

On the other hand, by combining Lemmas~\ref{lem:2sets} and~\ref{lem:lowercap}, we obtain that for every $K$-admissible sets $A$ and $B$,
\begin{equation} 
\mathbb P\big[  \partial_*A  \longleftrightarrow \partial_*B\big]\gtrsim \Big(\min_{\substack{a\in \partial_*A\\b\in \partial_* B}}\tau_{\mathbb H}(a,b)\Big)^2\cdot n^{d-2} \cdot |\partial_*A| \cdot |\partial_*B|.
\end{equation} 
If $a\in \partial_*A$ (resp. $b\in\partial_*B$), then $a_1\geq \varepsilon n$ (resp. $b_1\geq \varepsilon n$). As a result, for $(a,b)\in \partial_*A\times \partial_*B$, one has $n\lesssim |a-b|\lesssim \min(a_1,b_1)$ (where the implicit constants depend on $\varepsilon$). This is where we crucially require that $\varepsilon>0$. We can therefore use \eqref{CH1} to conclude that
\begin{equation}
	\Big(\min_{\substack{a\in \partial_*A\\b\in \partial_* B}}\tau_{\mathbb H}(a,b)\Big)^2\gtrsim (n^{2-d})^2.
\end{equation}
Altogether, this shows that, with the notation of Lemma~\ref{Prop:regpoints}, 
\begin{align}\label{eq:last eq main bk} 
\begin{split}
\tau_{\mathbb H}(x,y) & \gtrsim  \frac 1{n^{d-2}} \sum_{A,B} \mathbb P\big[\mathcal C_n^e(x) = A, \mathcal C_n^e(y) = B\big] \cdot |\partial_*A| \cdot |\partial_*B| \\
& = \frac 1{n^{d-2}}\cdot \mathbb E\big[|\partial_* \mathcal C_n^e(x)|\big] \cdot \mathbb E\big[|\partial_* \mathcal C_n^e(y)|\big] \\&\gtrsim \frac 1{n^{d-2}}\cdot \mathbb E\big[X_n^{\varepsilon,K-\textrm{reg}}(x)\big]\cdot \mathbb E\big[X_n^{\varepsilon,K-\textrm{reg}}(y)\big]
\\&\gtrsim \frac{1}{n^{d-2}}\mathbb E[X_n^{\varepsilon}(x)]\cdot \mathbb E[X_n^\varepsilon(y)], 
\end{split}
\end{align}
where we used Lemma \ref{Prop:regpoints} in the last inequality. Using \eqref{eq: 2pt full space estimate} in \eqref{eq:last eq main bk} yields the existence of $c=c(\varepsilon)>0$ such that, for every $x,y\in \mathbb H$ satisfying $\lfloor |x-y|/3\rfloor\geq n_0$,
\begin{equation}
\tau_{\mathbb H}(x,y)\ge c \sum_{u\in \partial B_n^\varepsilon(x)} \sum_{v\in \partial B_n^\varepsilon(y)} 
\tau_{B_n(x)}(x,u)\cdot \tau_{\mathbb H}(u,v)\cdot \tau_{B_n(y)}(v,y).
\end{equation}
This concludes the proof.
\end{proof}

\subsection{Proof of Lemma~\ref{Prop:regpoints}}
\label{sec:proofPropreg}
The proof of Lemma \ref{Prop:regpoints} is very similar to the proofs of Theorem 4 in~\cite{KozmaNachmias} and Proposition 5.7 in~\cite{Asselah2025capacity}.  
First, one needs to introduce a local density condition. To be more precise, fix $n\ge 1$, $x\in \mathbb H$, and for $s>0$ and $z\in \partial B_n(x)$, consider the event 
\begin{align}
\begin{split}
\mathcal T_s^{\textrm{loc}}(z) =& \Big\{|\mathcal C\big(y; B_{s^d}(z)\cap B_n(x)\big)\cap B_s(z)|\leq s^4 (\log s)^4,\ \forall y \in B_s(z)\Big\} \\
				&\cap  \Big\{|\mathcal C(y;B_{s^d}(z)\cap B_n(x)) \cap B_s(z)\cap \partial B_n(x) |\leq s^2 (\log s)^4, \ \forall y\in B_s(z)\cap \partial B_n(x) \Big\} \\ 
				&\cap \Big\{ \exists  \text{ at most }  (\log s)^3 \text{ disjoint paths from }B_s(z) \text{ to } \partial B_{s^d}(z)\text{ in } B_n(x) \Big\}.	
				\end{split}		
\end{align}
The interest of this event, when compared to $\mathcal T_s(z)$, is that it only depends on the configuration of the percolation inside the box $B_{s^d}(z)$, and is thus a purely local event, while to determine whether $\mathcal T_s(z)$ holds or not, one needs a priori to know the configuration in the whole box $B_n(x)$. The drawback is that this event is a priori less likely than $\mathcal T_s(z)$, but as Lemma~\ref{lem:Tsloc} below shows, it is still extremely likely, and furthermore, the following simple fact holds by construction (see Claim 4.1 in~\cite{KozmaNachmias} or Claim 5.4 in~\cite{Asselah2025capacity}). 
\begin{Lem}\label{lem:Tsloc.Ts}
One has for every $n\ge 1$, every $x\in \mathbb H$, every $z\in \partial B_n(x)$, and every $s>0$, 
\begin{equation}
\mathcal T_s^{\textup{loc}}(z) \subseteq \mathcal T_s(z).
\end{equation} 
\end{Lem}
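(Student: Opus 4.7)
The plan is to show that given the local information encoded in $\mathcal T_s^{\textup{loc}}(z)$, the cluster of $z$ restricted to $B_n(x)$ cannot be much larger inside $B(z,s)$ than the individual local clusters. The strategy is to partition the points of $\mathcal C(z;B_n(x))\cap B(z,s)$ according to which connected component of the smaller graph $B(z,s^d)\cap B_n(x)$ they belong to, and then bound the number of such components via the third (disjoint-paths) condition in $\mathcal T_s^{\textup{loc}}(z)$.

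First I would set $V := \mathcal C(z;B_n(x))\cap B(z,s)$ and consider the distinct connected components $C_1,\dots,C_m$ of $B(z,s^d)\cap B_n(x)$ that meet $V$. Since $V\subseteq B(z,s)\subseteq B(z,s^d)$, every point of $V$ lies in some $C_i$, and by definition each $C_i$ coincides with $\mathcal C(y;B(z,s^d)\cap B_n(x))$ for any of its points $y$. The first condition in $\mathcal T_s^{\textup{loc}}(z)$ then yields $|V\cap C_i|\le s^4(\log s)^4$ for every $i$, so
\begin{equation}
|V|\;\le\; m\cdot s^4(\log s)^4.
\end{equation}
The same reasoning, applied with the second local condition, gives the analogous bound for $|V\cap\partial B_n(x)\cap C_i|\le s^2(\log s)^4$.

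Next I would bound $m$. Say $z\in C_1$. For any $i\ge 2$, a point $y_i\in V\cap C_i$ is connected to $z$ by an open path in $B_n(x)$; this path cannot stay inside $B(z,s^d)\cap B_n(x)$ (otherwise $y_i$ and $z$ would lie in the same component $C_1$), hence it must exit $B(z,s^d)$. The last vertex of the path still lying in $C_i$ before the exit is a vertex $w_i\in C_i\cap\partial B(z,s^d)$, and the sub-path from $y_i$ to $w_i$ inside $C_i$ provides an open path from $B(z,s)$ to $\partial B(z,s^d)$ lying in $B_n(x)$. Since the $C_i$ are vertex-disjoint as components of $B(z,s^d)\cap B_n(x)$, the paths obtained for different indices $i\ge 2$ are vertex-disjoint. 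The third condition in $\mathcal T_s^{\textup{loc}}(z)$ therefore forces $m-1\le(\log s)^3$, so $m\le(\log s)^3+1$.

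Combining the two bounds gives $|V|\le\bigl((\log s)^3+1\bigr)s^4(\log s)^4\le s^4(\log s)^7$ for $s$ large enough (small values of $s$ being handled trivially, since the event $\mathcal T_s(z)$ holds automatically as soon as $s^4(\log s)^7$ exceeds $|B(z,s)|$), and the identical argument with the boundary condition produces $|V\cap\partial B_n(x)|\le s^2(\log s)^7$. Both parts of $\mathcal T_s(z)$ are thus verified, proving $\mathcal T_s^{\textup{loc}}(z)\subseteq\mathcal T_s(z)$. The only delicate point is the step linking the number $m$ of external components to the number of disjoint paths exiting $B(z,s^d)$, which is essentially a deterministic graph-theoretic observation once one notes that distinct components of $B(z,s^d)\cap B_n(x)$ supply automatically vertex-disjoint such paths; no probabilistic input is needed beyond the three defining conditions of $\mathcal T_s^{\textup{loc}}(z)$.
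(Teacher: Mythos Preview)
Your argument is the standard one (the paper itself gives no proof and simply cites \cite{KozmaNachmias} and \cite{Asselah2025capacity} for this fact), and the structure is correct: partition $V$ into its $B(z,s^d)\cap B_n(x)$-components, bound each piece via the first two local conditions, and bound the number $m$ of pieces via the third.

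There is, however, a small arithmetic slip at the end: the inequality
\[
\bigl((\log s)^3+1\bigr)(\log s)^4\;\le\;(\log s)^7
\]
is \emph{false} for every $s>1$, since the left-hand side equals $(\log s)^7+(\log s)^4$. The fix is to notice that your count of disjoint paths is off by one. When $m\ge 2$, the component $C_1$ containing $z$ \emph{also} reaches $\partial B(z,s^d)$: the open path in $B_n(x)$ from $z$ to any $y_i$ with $i\ge 2$ must leave $B(z,s^d)$, and its initial segment up to the first exit lies entirely in $C_1$ and starts at $z\in B(z,s)$. Hence all $m$ components (not just $C_2,\dots,C_m$) supply pairwise vertex-disjoint open paths from $B(z,s)$ to $\partial B(z,s^d)$ in $B_n(x)$, and the third local condition gives $m\le(\log s)^3$ directly. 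This yields $|V|\le(\log s)^3\cdot s^4(\log s)^4=s^4(\log s)^7$ exactly, and likewise for the boundary count. The case $m=1$ is immediate since $(\log s)^4\le(\log s)^7$ once $\log s\ge 1$; the behaviour of these expressions for very small $s$ is a cosmetic artefact of the definitions rather than a genuine obstacle.
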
 
As already mentioned, another fact we will use, and which is proved in~\cite[Claim~5.5]{Asselah2025capacity}, is the following. 
\begin{Lem}\label{lem:Tsloc}
There exists a constant $c>0$, such that for every $n\ge 1$, every $x\in \mathbb H$, every $z\in \partial B_n(x)$, and every $s>0$, 
\begin{equation}
\mathbb P[\mathcal T_s^{\textup{loc}}(z)] \ge 1 - \exp\big(-c(\log s)^4\big).
\end{equation}
\end{Lem}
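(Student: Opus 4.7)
The event $\mathcal T_s^{\textup{loc}}(z)$ is an intersection of three conditions: (i) a bulk cluster-volume bound in $B(z,s)$, (ii) a cluster-volume bound on the slab $B(z,s)\cap \partial B_n(x)$, and (iii) a bound on the number of disjoint arms from $B(z,s)$ to $\partial B(z,s^d)$ in $B_n(x)$. The plan is to bound the probability that each condition fails by $\exp(-c(\log s)^4)$ and conclude by a union bound.

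For (i), I would run a high-moment Markov argument. By the standard tree-graph / iterated BK inequalities, together with the first-moment input $\sum_{w\in B(y,2s)}\tau(y,w)\lesssim s^2$ coming from~\eqref{eq: 2pt full space estimate}, one establishes
\begin{equation*}
\mathbb E\big[\,|\mathcal C(y;\,B(z,s^d)\cap B_n(x))\cap B(z,s)|^k\,\big] \le C^k k! \, s^{2k}
\end{equation*}
for every $y\in B(z,s)$ and every integer $k\ge 1$. Markov's inequality with $k=\lfloor(\log s)^4\rfloor$ bounds the per-$y$ failure probability by $(Ck/(s^2(\log s)^4))^k\le\exp(-c(\log s)^5)$, which survives the union bound over the $O(s^d)$ points of $B(z,s)$. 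For (ii), the same argument applies after observing that restricting the sum to the $(d-1)$-dimensional hyperplane $\partial B_n(x)$ reduces the first-moment input from $s^2$ to $s$, hence the moment bound to $C^k k!\,s^k$, matched by the reduced threshold $s^2(\log s)^4$.

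For (iii), if there exist $k$ vertex-disjoint open paths from $B(z,s)$ to $\partial B(z,s^d)$, one can select $k$ distinct starting vertices $u_1,\dots,u_k\in B(z,s)$, from each of which an open arm reaches $\partial B(u_i, s^d-s)$ disjointly. Applying \eqref{eq:BK ineq} in product form and using the high-dimensional one-arm bound $\mathbb P[u\connect{}\partial B(u,r)]\lesssim r^{-2}$ (a standard consequence of \eqref{eq: 2pt full space estimate} and the triangle condition), each factor is at most $C s^{-2d}$. A union bound over the $\binom{|B(z,s)|}{k}\le s^{dk}/k!$ choices of starting tuple yields
\begin{equation*}
\mathbb P[\#\,\text{disjoint arms}\ge k] \le \frac{C^k}{k!}\,s^{-dk},
\end{equation*}
and choosing $k=\lfloor(\log s)^3\rfloor$ gives the required tail $\exp(-c(\log s)^4)$.

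The main obstacle is the arm bound used in step (iii): an estimate $\mathbb P[u\connect{}\partial B(u,r)]\lesssim r^{-\alpha}$ with $\alpha>1$ is needed to beat the combinatorial factor $\binom{|B(z,s)|}{k}$. The sharp exponent $\alpha=2$ does follow in the high-dimensional regime from \eqref{eq: 2pt full space estimate} and the triangle condition by arguments independent of the regularity framework itself, so the proof is not circular; nevertheless, importing this input cleanly is the most delicate point, since one must also verify that the restriction to $B_n(x)$ does not degrade the arm estimate (which is straightforward because restricted two-point functions are pointwise dominated by full-space ones). The two volume estimates (i) and (ii) are comparatively routine once the tree-graph moment machinery is in place.
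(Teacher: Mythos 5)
First, a point of reference: the paper does not prove this lemma at all --- it imports it verbatim from \cite[Claim~5.5]{Asselah2025capacity}, which in turn follows \cite{KozmaNachmias} --- so your attempt has to be judged on its own merits. Your template (tree-graph moment bounds plus Markov and a union bound for the volume conditions, BK plus an entropy count for the arm condition) is the right one, but two of the three steps contain errors that are not cosmetic. For condition (i), the moment bound $\mathbb E\big[|\mathcal C(y;\cdot)\cap B(z,s)|^k\big]\le C^kk!\,s^{2k}$ is false: already at $k=2$ the tree-graph diagram gives $\sum_{u,v\in B(z,s)}\sum_w\tau(y,w)\tau(w,u)\tau(w,v)\asymp s^2\cdot s^2\cdot s^2=s^6$, and this order is attained (the cluster reaches distance $s$ with probability $\asymp s^{-2}$ and then carries $\asymp s^4$ points of $B(z,s)$). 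The tree-graph inequality does not give $C^kk!(\mathbb E X)^k$; each additional factor of $X$ costs $\asymp s^2$ for the new leaf \emph{and} $\asymp s^2$ for the new branch point, so the correct bound is $\mathbb E[X^k]\le k!\,C^ks^{4k-2}$. You also test against the threshold $s^2(\log s)^4$, whereas the event's threshold for (i) is $s^4(\log s)^4$; with the correct moment bound and the correct threshold, Markov with $k\asymp(\log s)^4$ does yield $\exp(-c(\log s)^4)$ per point, so (i) is repairable.

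Condition (ii) is the genuine gap. Running ``the same argument'' with the unrestricted two-point function gives $\mathbb E[(X')^k]\asymp k!\,C^ks^{3k-2}$ (first moment $\asymp s$, each extra leaf on $\partial B_n(x)$ costs $s$ and each extra branch point costs $s^2$), not $k!\,C^ks^{k}$; Markov then only controls the tail above $\asymp s^3(\log s)^4$, missing the stated threshold $s^2(\log s)^4$ by a full power of $s$. The threshold $s^2$ is correct precisely because the cluster in question is that of the percolation \emph{restricted to} $B_n(x)$: for points of $\partial B_n(x)$ the restricted two-point function decays like $|y-u|^{-d}$ (resp.\ $(1+h)|y-u|^{-d}$ when one endpoint sits at height $h$), i.e.\ the boundary estimates \eqref{CH2}--\eqref{CH3}. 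These bring the first moment down to $O(1)$ and the $k$-th moment to $k!\,C^ks^{2k-2}$, after which Markov against $s^2(\log s)^4$ works. Bounding $\tau_{B_n(x)}\le\tau$ here, as you do, discards exactly the mechanism that makes (ii) true; this is a missing idea, not a slip. Finally, for condition (iii), the input $\mathbb P[u\leftrightarrow\partial B(u,r)]\lesssim r^{-2}$ is the Kozma--Nachmias one-arm theorem, a substantial result in its own right, not ``a standard consequence of \eqref{eq: 2pt full space estimate} and the triangle condition'': the elementary consequence is only $\mathbb P[u\leftrightarrow\partial B(u,r)]\le\mathbb P[|\mathcal C(u)|\ge r/L]\lesssim r^{-1/2}$, which is far too weak to beat your entropy factor $s^{dk}/k!$. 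Granting the arm-exponent theorem as an external black box, your count for (iii) is fine (and the restriction to $B_n(x)$ is indeed harmless by monotonicity), but it must be cited as such, and one should check that its proof does not itself rest on the regularity lemma being proved.
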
 
The last fact we shall need is Lemma 1.1 from~\cite{KozmaNachmias}, which we state here for the sake of completeness. 
\begin{Lem}\label{lem:KN}
There exist positive constants $c$ and $C$, such that for every $u\in \mathbb H$, every $s>0$,  and every $z\in \partial B_s(u)$,
\begin{equation} 
\tau_{B_s(u)}(u,v)\ge c\exp(-C(\log s)^2).
\end{equation}
\end{Lem}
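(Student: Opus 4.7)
The target bound $c\exp(-C(\log s)^2)=c\cdot s^{-C\log s}$ is quasi-polynomial in $s$, which indicates a scheme that chains $\Theta(\log s)$ ``short'' connections, each costing a polynomial factor. The plan is to interpolate between $u$ and $v$ by $N := \lceil \log s\rceil$ intermediate anchor points on the segment from $u$ to $v$, apply FKG to the corresponding local connection events, and rely on a strong \emph{interior} version of the full-space two-point lower bound at each step.

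Concretely, set $N:=\lceil \log s\rceil$ and pick points $u = p_0, p_1, \dots, p_N=v$ on the segment from $u$ to $v$, evenly spaced so that $|p_i-p_{i+1}|\asymp s/N$. Define the sub-boxes $B_i := B_{2s/N}(p_i)\cap B_s(u)\subseteq B_s(u)$. The increasing events $\{p_i \connect{B_i\:} p_{i+1}\}$ jointly imply $\{u\connect{B_s(u)\:} v\}$, so the FKG inequality yields
\begin{equation*}
\tau_{B_s(u)}(u,v) \ge \prod_{i=0}^{N-1} \tau_{B_i}(p_i,p_{i+1}).
\end{equation*}
The key ingredient is the interior lower bound $\tau_{B_{2r}(x)}(x,y) \gtrsim r^{-(d-2)}$ whenever $|x-y|\asymp r$, which combines the full-space estimate $\tau(x,y)\asymp r^{-(d-2)}$ from \eqref{eq: 2pt full space estimate} with control on the contribution of paths that exit $B_{2r}(x)$. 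Granted this, each factor above is of order at least $(s/N)^{-(d-2)}$ (the bounded number of factors with $i\in\{N-1\}$ where $B_{2s/N}(p_i)$ sticks out of $B_s(u)$ being absorbed into the constant), giving
\begin{equation*}
\log \tau_{B_s(u)}(u,v) \gtrsim -(d-2)\,N\,\log(s/N) \gtrsim -(d-2)(\log s)^2,
\end{equation*}
which is the stated bound.

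The main technical obstacle is the interior lower bound $\tau_{B_{2r}(x)}(x,y) \gtrsim r^{-(d-2)}$. A naive BK expansion of the escape error gives $\tau(x,y) - \tau_{B_{2r}(x)}(x,y) \lesssim \sum_{z\in\partial B_{2r}(x)}\tau(x,z)\tau(z,y) \lesssim r^{-(d-3)}$, which is \emph{larger} than the target. To go beyond this, one invokes a sharper triangle-condition-based argument (valid in $d>6$), essentially showing that a positive fraction of critical open clusters connecting two points at distance $r$ are contained in a constant-factor-larger ball. Alternatively, one can bootstrap on scales: use an a priori weaker form of the lemma to control the escape contribution and then upgrade. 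This interior estimate is where the hypothesis $d>6$, through the two-point estimate \eqref{eq: 2pt full space estimate}, enters crucially.
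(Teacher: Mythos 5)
First, note that the paper does not prove this lemma at all: it is quoted verbatim as Lemma~1.1 of \cite{KozmaNachmias} ``for the sake of completeness''. Your overall mechanism --- chaining $\asymp\log s$ increasing connection events via FKG, each costing a polynomial factor, to produce $\exp(-C(\log s)^2)$ --- is indeed the mechanism behind the Kozma--Nachmias proof. But your execution has a genuine gap, concentrated exactly where you place it: the ``interior lower bound'' $\tau_{B_{2r}(x)}(x,y)\gtrsim r^{-(d-2)}$ for prescribed $x,y\in\mathbb H$ with $|x-y|\asymp r$. Three problems. (i) As stated it is \emph{false} in the half-space setting: if $u$ and $v$ both lie in the hyperplane $\{x_1=0\}$, so do all your anchors $p_i$, and already without any box restriction one has $\tau_{\mathbb H}(p_i,p_{i+1})\asymp|p_i-p_{i+1}|^{-d}$ by \eqref{CH3}; the order $r^{-d}$ would still yield the quasi-polynomial bound, but no box-restricted lower bound of any polynomial order at \emph{prescribed} points is among the paper's inputs (Propositions~\ref{prop:chatterjeehanson} and~\ref{prop:chatterjeehansonsosoe} restrict to $\mathbb H$, not to boxes). (ii) Neither of your suggested routes closes the gap: you yourself compute that the BK escape error is of order $r^{3-d}\gg r^{2-d}$, and the ``bootstrap on scales'' is circular --- a quasi-polynomial a priori bound at scale $r$ cannot upgrade the escape term to order $r^{2-d}$. (iii) Most tellingly, if your key ingredient held uniformly, the chaining would be superfluous: applying it once with $r\asymp s$ gives $\tau_{B_s(u)}(u,v)\gtrsim s^{-(d-2)}$, a \emph{polynomial} bound strictly stronger than the lemma. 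That only a quasi-polynomial bound is known in this generality is a strong signal that the ingredient is not accessible.

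The standard proof avoids pointwise bounds at prescribed anchors entirely. Its only quantitative input is \emph{existential}: on each sphere $\partial B_r(w)$ one has $\sum_{z\in\partial B_r(w)}\tau_{B_r(w)}(w,z)\gtrsim 1/r$ (for interior boxes even $\gtrsim 1$, via sharpness of the phase transition, e.g.\ the inequality $\varphi_{p_c}(S)\ge 1$ of \cite{DuminilTassionNewProofSharpness2016}, which the present paper invokes in the proof of \eqref{eq:pioneer2}), so by pigeonhole \emph{some} $z\in\partial B_r(w)$ satisfies $\tau_{B_r(w)}(w,z)\ge c r^{-d}$. One then combines lattice symmetries with a \emph{dyadic} homing-in on the target $v$ (steps of length $s/2, s/4,\dots$), paying $\gtrsim r_i^{-d}$ at scale $r_i=s2^{-i}$ and finishing with a deterministic open path of bounded length; FKG multiplies the costs and yields $\exp(-C(\log s)^2)$. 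The inability to prescribe where the well-connected point sits is precisely why the anchors cannot be fixed in advance as in your scheme, and why the resulting bound is quasi-polynomial rather than polynomial. To repair your write-up, replace the pointwise interior bound by this existential input and restructure the chain accordingly; as written, the proposal assumes a statement that is both unavailable and stronger than the lemma it is meant to prove.
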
 
Using the FKG inequality, it follows from Lemma \ref{lem:KN}, that the probability to connect two arbitrary points of $\partial B_s(u)$ (for some $u\in \mathbb H$) is at least $c^2\exp(-2C(\log s)^2)$. We are now in a position to prove Lemma \ref{Prop:regpoints}. 

\begin{proof}[Proof of Lemma~\textup{\ref{Prop:regpoints}}] 
Fix $x\in \mathbb H$ and $\varepsilon \in (0,1)$. Let $n,K\ge 1$ to be fixed. Let us say that a point $z\in \partial B_n(x)$ is $s$-locally bad if the event $\mathcal T_s^{\textup{loc}}(z)$ does not hold, and let us denote by $X_n^{\varepsilon, s\textup{-loc-bad}}(x)$ the number of points on $\partial B_n^\varepsilon(x)$ which are 
$s$-locally bad and connected to $x$ in $B_n(x)$. 
Note that, due to Lemma~\ref{lem:Tsloc.Ts}, one has  
\begin{equation}\label{eq:Xns}
\mathbb E[X_n^{\varepsilon,K-\textrm{reg}}(x)] \ge \mathbb E[X_n^\varepsilon(x)] - \sum_{s\ge K} \mathbb E[X_n^{\varepsilon, s\textup{-loc-bad}}(x)].
\end{equation} 
We now upper bound each term of the above sum separately. 
First of all, for $s$ such that $8s^d > n$, we simply use Lemma~\ref{lem:Tsloc} and a union bound over all the points on $\partial B_n(x)$, to get (for some $c_1=c_1(d)>0$)
\begin{equation}
\mathbb E[X_n^{\varepsilon, s\textup{-loc-bad}}(x)]\lesssim n^{d-1}\cdot \exp(-c_1(\log n)^4).
\end{equation}
Additionally, for $s$ such that $s^2$ is much larger than $n^{d-1}$ (or larger than the total number of points on $\partial B_n(x)$), the set of $s$-locally bad points is empty by definition so that $\mathbb E[X_n^{\varepsilon, s\textup{-loc-bad}}(x)]=0$. On the other hand one has by Lemma~\ref{lem:KN} (and the remark following it),
\begin{equation}
	\mathbb E[X_n^\varepsilon(x)] \gtrsim \exp(-C_1(\log n)^2),
\end{equation}
for some constant $C_1>0$. Combining the two previously displayed equations, one can choose $n$ large enough so that 
\begin{equation}\label{eq:Xns2}
\sum_{s:8s^d>n} \mathbb E[X_n^{\varepsilon, s\textup{-loc-bad}}(x)]\le \frac 14 \cdot \mathbb E[X_n^\varepsilon(x)]. 
\end{equation}
We now fix $s\ge K$ such that $8s^d\le n$ and consider the set $U = \{ u \in \mathbb Z^d : u_i \in \{0,s^d\}, \ \forall i =1,\dots,d\}$. For each $w\in U$, we define 
\begin{equation}
\mathcal B(w) = \{B_{2s^d}(z) : z\in w+4s^d\cdot \mathbb Z^d\}.
\end{equation} 
Denote by $Q(w)$ the set of all the boxes of this partition which intersect $\partial B_n^\varepsilon(x)$.
We explore them using the following algorithm. First, we reveal the whole percolation configuration outside the union of these boxes. 
Next, if at least one of the boxes of $Q(w)$ is connected to $x$ by an open path, that remains in the explored region, we choose one at random and 
reveal the configuration inside it, and we continue as long as there still exist an unexplored box of $Q(w)$ which is connected to $x$ via an open path in the explored region.  
We let $N(w)$ be the number of boxes of $Q(w)$ which have been revealed by this algorithm. Note that during this exploration procedure, each time we reveal the configuration inside a 
new box, almost surely, conditionally on the configuration outside the box, the probability that it contains a pioneer point is at least $c_2\exp(-C_2 (\log s)^2)$ by Lemma~\ref{lem:KN} and the remark following it, for some constant $c_2,C_2>0$. Hence, for any $w\in U$, 
\begin{equation}
\mathbb E[X_n^\varepsilon(x)] \ge c_2\exp(-C_2(\log s)^2) \cdot \mathbb E[N(w)], 
\end{equation} 
and hence also (since $|U|=2^d$), 
\begin{equation}\label{eq:Xneps}
\mathbb E[X_n^\varepsilon(x)] \ge \frac{c_2}{2^d} \exp(-C_2(\log s)^2) \cdot \sum_{w\in U} \mathbb E[N(w)]. 
\end{equation}
For a box $q\in Q(w)$, call the \emph{interior} of $q$ the set of points in $q$ which are at distance at least $s^d$ from the points which are in $q^c\cap B_n(x)$. Observe that as $w$ varies in $U$, the union of all the interiors of the boxes $q\in Q(w)$ covers the whole boundary $\partial B_n^\varepsilon(x)$ (recall that we assume $8s^d\le n$).   
Note also that for a point $z$ on $\partial B_n^\varepsilon(x)$ which is in the interior of a box $q\in Q(w)$, the event $\mathcal T_s^{\textup{loc}}(z)$ only depends on the configuration of edges inside $q$. Since there are at most order $s^{d^2}$ such points in each box $q\in Q(w)$, a union bound and Lemma~\ref{lem:Tsloc} give that for some constants $c_3,C_3>0$, and for any $s$ as above,
\begin{equation}  
\mathbb E[X_n^{\varepsilon, s\textup{-loc-bad}}(x)] \le C_3 s^{d^2}\exp(-c_3(\log s)^4) \sum_{w\in U}  \mathbb E[N(w)].
\end{equation}
Hence, using again~\eqref{eq:Xneps}, and taking $K$ large enough ensures that for all $n\ge 1$,
\begin{equation}
\sum_{s:K\le s \le (n/8)^{1/d}}\mathbb E[X_n^{\varepsilon, s\textup{-loc-bad}}(x)] \le \frac 14 \cdot \mathbb E[X_n^\varepsilon(x)].
\end{equation}
Together with~\eqref{eq:Xns} and~\eqref{eq:Xns2}, this concludes the proof of the lemma. 
\end{proof} 

\paragraph{Acknowledgements.} RP thanks Hugo Duminil-Copin for stimulating discussions at an early stage of this project, and acknowledges the support of the Swiss National Science Foundation through a Postdoc.Mobility grant. BS acknowledges the support from the grant ANR-22-CE40-0012 (project LOCAL).

\bibliographystyle{alpha}
\bibliography{biblio.bib}
\end{document}